\documentclass[12pt]{amsart}

\usepackage{amsmath,amstext,amsgen,amsbsy,amsopn}
\usepackage{amssymb,amsfonts,amsthm}
\usepackage{latexsym,amsxtra,euscript,amscd}
\usepackage{array}
\usepackage{mathtools}
\usepackage{xcolor}
\usepackage{graphics,graphicx}
\usepackage{enumitem}

\newtheorem{theorem}{Theorem}
\newtheorem{lemma}{Lemma}
\newtheorem{corollary}{Corollary}
\newtheorem{prop}{Proposition}

\begin{document}

\centerline{{\bf Representations with $\mathbf{k}$-generalized Fibonacci numbers}}
\medskip

\centerline{Taboka Chalebgwa\footnote{ORCID: 0000-0002-6132-6368; Department of Mathematics and Applied Mathematics, University of Pretoria, Hatfield, Pretoria, 0028, South Africa} -- L\'aszl\'o Szalay\footnote{ORCID: 0000-0002-4582-6100; Department of Mathematics, J.~Selye University, SK-94501 Kom\'arno, Bratislavsk\'a cesta 3322, Slovakia; and Department of Mathematics, Institute of Basic Sciences, University of Sopron, 9400 Sopron, Bajcsy-Zsililnszky utca 4, Hungary.} }
\medskip

\noindent{\it Abstract.} 
We study representations of integers using $k$-generalized Fibonacci numbers. For $k\geq 2$, we first consider signed representations of zero with coefficients in $\{-1,0,1\}$ and give a recursive description of their number. The resulting counting sequences satisfy linear recurrences whose characteristic polynomials are determined explicitly. In the
Fibonacci and Tribonacci cases, these recurrences reveal unexpected connections between the corresponding representation counts and Tribonacci and Fibonacci sequences, respectively.

We then consider representations with coefficients in $\{0,1\}$ in the Tribonacci case. Using a random inhomogeneous Tribonacci recurrence, we construct a binary-tree model in which representation multiplicities are encoded by a family of polynomials satisfying the product formula $Q_n(x)=\prod_{k=2}^{n-1}(1+x^{T_k})$. This product admits a probabilistic interpretation in terms of weighted Bernoulli sums. After normalization by $T_n$, these sums converge in
distribution to the Bernoulli convolution $\sum_{j=1}^{\infty}\varepsilon_j\rho^{-j}$, where $\rho$ is the Tribonacci constant and the $\varepsilon_j$ are independent Bernoulli random variables. The limiting distribution satisfies a natural self-similarity relation, linking the representation problem to self-similar measures associated with Tribonacci scaling.

 \section{Introduction}

We examine the number of representations of integers by the so-called $k$-generalized Fibonacci numbers, sometimes referred to as the ``$k$-bonacci" numbers. For such a sequence, which we shall denote by $(a_n)$ locally, two distinct but related problems are investigated:
\begin{itemize}[leftmargin=0.2in]
	\item 
	Firstly, fixing a positive integer $n$, we are interested in the number of solutions to the diophantine equation 
	$$\alpha_0 a_0+\alpha_1 a_1+\cdots+\alpha_n a_n=0,$$
	with the coefficients $\alpha_i\in\{-1,0,1\}$.
	We solve this problem completely by giving a recursive description for the number of solutions.
	\item In the second problem, we restrict ourselves to the case of ``unsigned coefficients" $\alpha_i\in\{0,1\}$. For this variation on the theme, given a positive integer $N$, we investigate the number of linear combinations satisfying
	$$\alpha_0 a_0+\alpha_1 a_1+\cdots+\alpha_n a_n = N.$$
	
\end{itemize}

\vspace{0.2cm}

Our main results show that the number of signed representations satisfies a linear recurrence whose characteristic polynomial can be determined explicitly. In the unsigned Tribonacci case, the representation frequencies are encoded by a family of polynomials admitting the product formula $Q_n(x)=\prod_{k=2}^{n-1}(1+x^{T_k})$, where $T_k$ denotes the $k$th Tribonacci number.
\vspace{0.2cm}

Indeed, while representations based on the Fibonacci sequence have been widely studied, considerably less is known when it comes to higher-order generalizations. In particular, enumerative results for signed and restricted representations involving $k$-generalized Fibonacci numbers are somewhat scarce in the literature. One approach to the second problem was explored in \cite{LSz}, for the special case of the Fibonacci sequence. The sought after number of such representations appeared as the coefficients of a recursively defined polynomial sequence.
\vspace{0.2cm}

Inspired by the circle of ideas from \cite{LSz}, we handle the case of the Tribonacci numbers. In principle, our method could work for any $k\ge2$, however, the computations involved get more technical as $k$ increases. It is for this reason (and as a ``proof of concept,") that we restrict ourselves only to the analogous case of $k=3$, and solve it completely.
\vspace{0.2cm}

On the other hand, we also highlight that the arguments used throughout the paper rely on dominance properties of $k$-generalized Fibonacci numbers similar in spirit to those appearing in Zeckendorf-type decomposition results (The Zeckendorf representation was first investigated by Lekkerkerker \cite{Le}). We deviated from this classical setting on two fronts: on the one hand, we allow signed coefficients and non-unique representations, and on the other hand, we pursue enumerative as opposed to uniqueness results.
\vspace{0.2cm}

With the above preamble in mind, we now proceed to remind the reader of some of the requisite technical definitions, and set the stage for our results.
\vspace{0.2cm}

For a positive integer $k\ge2$, the $k$-generalized Fibonacci sequence $(F_{n}^{(k)})_{n\in\mathbb{N}}$ has initial values
\begin{equation*}\label{inivalues}
	F_{0}^{(k)}=F_{1}^{(k)}=\cdots=F_{k-2}^{(k)}=0,\; F_{k-1}^{(k)}=1,
\end{equation*}
and satisfies the recurrence
\begin{equation}\label{reck}
	F_{n}^{(k)}=F_{n-1}^{(k)}+F_{n-2}^{(k)}+\cdots+F_{n-k}^{(k)}\qquad {\rm for}\,{\rm all}\;n\ge k.
\end{equation}

We remark that, in some literature, the sequence is sometimes represented alternatively via a translation of the subscripts by $k-2$ in initial values. That is: $F_{0}^{(k)}=0$, $F_{1}^{(k)}=1$, and so forth.
\vspace{0.2cm}

Using (\ref{reck}), the following ``observation" can be immediately deduced:
\begin{equation*}\label{nextvalues}
	F_{k}^{(k)}=1,\;F_{k+1}^{(k)}=2,\;F_{k+2}^{(k)}=4,\;\cdots,\; F_{2k-2}^{(k)}=2^{k-2},\; F_{2k-1}^{(k)}=2^{k-1}.
\end{equation*}

When $k=2$, one recovers the Fibonacci sequence $(F_n^{(2)})=(F_n)$, while the case $k=3$ is referred to as the Tribonacci sequence, denoted by $(F_n^{(3)})=(T_n)$. Its recursion is defined by $T_0=T_1=0$, $T_2=1$, and $T_n=T_{n-1}+T_{n-2}+T_{n-3}$ for $n \ge 3$. 
\vspace{0.2cm}

Proceeding, since our second (partition) problem eventually boils down to examining the behavior of an inhomogeneous linear recurrence whose inhomogeneous part is a random value from the Boolean set $\{a,b\}$, in order to discuss the solution, we need the notion of a random recurrence sequence. To this end:

Let $a<b$ be two arbitrary real numbers. Suppose that the terms $w_{n}=w_n(a,b)$ of a random sequence $(w_n)_{n \ge 0}$ are generated by, say, a coin toss, whereby $w_n = a$ with probability $p\in[0,1]$ (if the outcome is heads, for instance), and $w_n = b$ with probability $q=1-p$ (in case of tails).\\

We study the random inhomogeneous Tribonacci recurrence
\begin{equation}\label{rule}
	G_{n}=G_{n-1}+G_{n-2}+G_{n-3}+w_{n-3}(a,b)\qquad(n\ge3)
\end{equation}
with initial values $G_0=G_1=0$, $G_2=1$.
\vspace{0.2cm}

Clearly, the extreme cases where $p=1$ or $p = 0$ produce the deterministic sequences $G_{n}=G_{n-1}+G_{n-2}+G_{n-3}+a$ and	$G_{n}=G_{n-1}+G_{n-2}+G_{n-3}+b$, respectively.
\vspace{0.2cm}

In accordance with (\ref{rule}), $(G_n)_{n \geq 2}$ may take two possible values: either $G_{n}=G_{n-1}+G_{n-2}+G_{n-3}+a$ or $G_{n}=G_{n-1}+G_{n-2}+G_{n-3}+b$. Naturally, this particular scenario can be captured via a binary tree denoted by $({\mathcal T}_{n,k})$. The random inhomogeneous recurrence provides a convenient way to generate all admissible $0-1$ combinations simultaneously. Each path in the associated binary tree corresponds to a choice of coefficients, and repeated values reflect multiple representations. Figure \ref{fig1} illustrates the first few levels of the tree. 
\begin{figure}[h]
	\centering
	\includegraphics[scale=0.33]{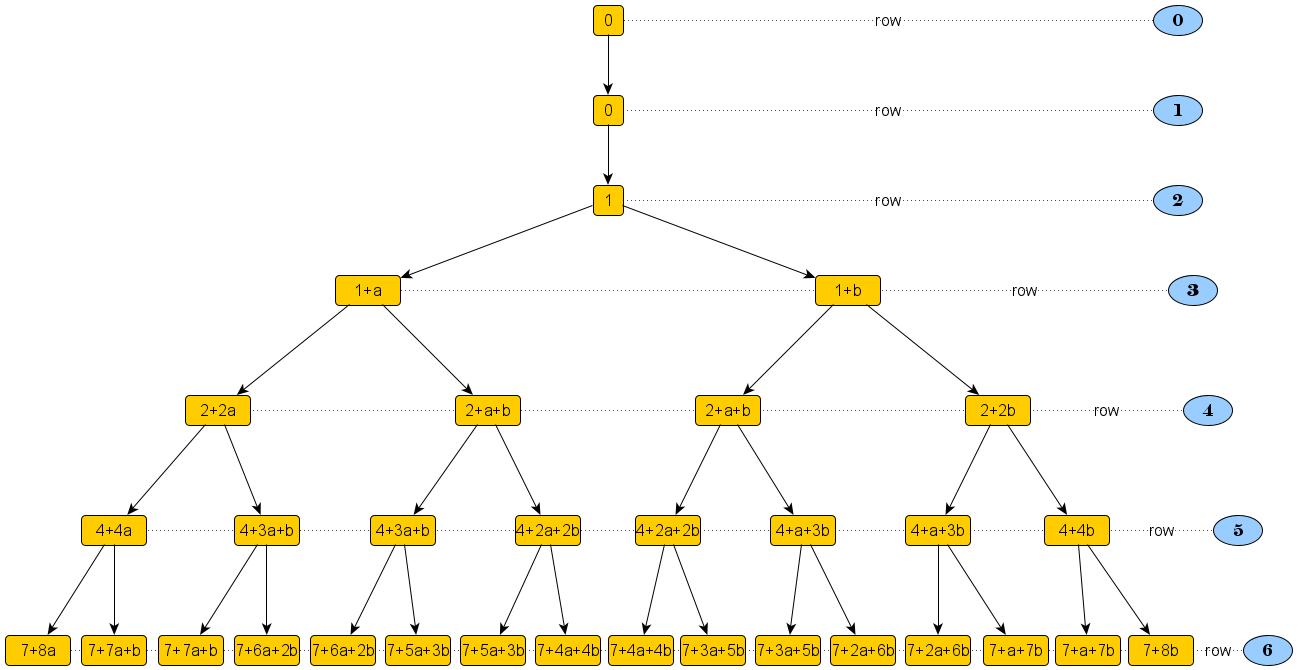} 
	\caption{Binary tree linked to (\ref{rule}) with parameters $a$ and $b$.}
	\label{fig1}
\end{figure}

As it turns out, the values (and frequencies thereof,) appearing in each row or level of the above tree can be described rather precisely. In particular, as will be shown in the paper, these frequencies are coefficients of certain well defined polynomials.
\vspace{0.2cm}

The paper is organized as follows: In Section 2, we collect all the preliminary properties of the $k$-generalized Fibonacci sequences. We then establish a summation identity required in subsequent arguments. Section 3 is devoted to the resolution of the signed representations problem. We achieve this by first deriving a linear recurrence satisfied by the number of admissible representations, then examining its behavior for the lower order cases. Section 4 deals with the $\{0,1\}$ representation problem in the Tribonacci case, where a random inhomogeneous recurrence leads to a binary-tree model whose value frequencies are encoded by explicitly determined polynomials. Finally, in Section 5 we give these polynomials a probabilistic interpretation in terms of weighted Bernoulli sums. After normalization by the $T_n$, we prove convergence in
distribution to a Bernoulli convolution and derive the corresponding self-similarity equation for the limiting distribution.
\vspace{0.2cm}

We now proceed to Section 2, where we compile all the necessary technical lemmas.
 
\section{Preliminaries}

\begin{lemma}\label{l1}
	If $n\ge k-1$, then 
	$$
	2F_{n}^{(k)}\ge F_{n+1}^{(k)}.
	$$
\end{lemma}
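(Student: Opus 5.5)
The plan is to prove Lemma~\ref{l1} by comparing $F_{n+1}^{(k)}$ with $2F_n^{(k)}$ through the recurrence (\ref{reck}), splitting into the initial range and the generic range.

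\textbf{Generic range, $n\ge k$.} Apply (\ref{reck}) twice, to $F_{n+1}^{(k)}$ and to $F_{n}^{(k)}$, and subtract. This gives the standard identity
$$
F_{n+1}^{(k)} = 2F_n^{(k)} - F_{n-k}^{(k)} \qquad (n\ge k),
$$
since $F_{n+1}^{(k)}=F_n^{(k)}+(F_{n-1}^{(k)}+\cdots+F_{n-k+1}^{(k)})$ and the bracket equals $F_n^{(k)}-F_{n-k}^{(k)}$. All terms of the sequence are nonnegative: the initial values are $0$ or $1$, and (\ref{reck}) preserves nonnegativity by induction. Hence $F_{n+1}^{(k)}\le 2F_n^{(k)}$ for every $n\ge k$.

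\textbf{Initial case, $n=k-1$.} Here we only need the computed values $F_{k-1}^{(k)}=1$ and $F_k^{(k)}=1$, which give $2\cdot 1\ge 1$. Alternatively, $F_{k}^{(k)}=F_{k-1}^{(k)}+\cdots+F_0^{(k)}=1$ directly from (\ref{reck}). The ``observation'' also shows the doubling $F_{j+1}^{(k)}=2F_j^{(k)}$ holds with equality for $k\le j\le 2k-2$. This is consistent with the identity above, since $F_{j-k}^{(k)}=0$ there.

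\textbf{Main obstacle.} There is essentially none. The only point to check is that the subtraction identity is legitimate exactly when both $n$ and $n+1$ are at least $k$, so that (\ref{reck}) applies to both; this is why $n=k-1$ is treated separately. Nonnegativity of the terms is routine.
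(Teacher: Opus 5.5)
Your proof is correct and follows essentially the same route as the paper's. The paper also checks $n=k-1$ directly, and for $n\ge k$ it writes $F_{n+1}^{(k)}-F_n^{(k)}=\sum_{j=n+1-k}^{n-1}F_j^{(k)}\le\sum_{j=n-k}^{n-1}F_j^{(k)}=F_n^{(k)}$, which is your identity $F_{n+1}^{(k)}=2F_n^{(k)}-F_{n-k}^{(k)}$ combined with nonnegativity of $F_{n-k}^{(k)}$.
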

\begin{proof}
When $n=k-1$ we see $2F_{k-1}^{(k)}=2\cdot1\ge1=F_{k}^{(k)}$. 
For $n\ge k$ consider
the difference $F_{n+1}^{(k)}-F_{n}^{(k)}$. This satisfies
$$
F_{n+1}^{(k)}-F_{n}^{(k)}=\sum_{j=n+1-k}^{n-1}F_{j}^{(k)}\le\sum_{j=n-k}^{n-1}F_{j}^{(k)}=F_{n}^{(k)},
$$
which implies the statement of the lemma.
\end{proof}

\noindent Remark: Note that, the statement is in fact true for all $n\ge0$ except for the case $n=k-2$.

\begin{lemma}\label{l2}
For $n\ge k-1$ the inequality
$$
\sum_{j=0}^{n}F_{j}^{(k)}<F_{n+2}^{(k)}
$$
is true.
\end{lemma}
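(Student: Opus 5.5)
We prove the inequality by induction on $n$, starting at $n=k-1$. The only tools needed are Lemma~\ref{l1} and the recurrence (\ref{reck}).

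\emph{Base case $n=k-1$.} By the initial values, the left-hand side is $\sum_{j=0}^{k-1}F_j^{(k)}=F_{k-1}^{(k)}=1$. The right-hand side is $F_{k+1}^{(k)}=2$, so the inequality $1<2$ holds.

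\emph{Inductive step.} Assume $\sum_{j=0}^{n}F_j^{(k)}<F_{n+2}^{(k)}$ for some $n\ge k-1$. Adding $F_{n+1}^{(k)}$ to both sides gives
$$
\sum_{j=0}^{n+1}F_j^{(k)}<F_{n+2}^{(k)}+F_{n+1}^{(k)}.
$$
It therefore suffices to show $F_{n+2}^{(k)}+F_{n+1}^{(k)}\le F_{n+3}^{(k)}$.

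Since $n+3\ge k+2>k$, the recurrence (\ref{reck}) gives $F_{n+3}^{(k)}=F_{n+2}^{(k)}+F_{n+1}^{(k)}+\cdots+F_{n+3-k}^{(k)}$. For $k\ge2$ this sum contains at least the two terms $F_{n+2}^{(k)}$ and $F_{n+1}^{(k)}$. All remaining terms are nonnegative, so the needed inequality holds and the induction closes.

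The argument has no real obstacle. The only point to check is that the index range in (\ref{reck}) is valid, i.e.\ that $n+3\ge k$, which follows from $n\ge k-1$.

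Lemma~\ref{l1} can serve as an alternative in the base region, since it yields $F_{n+1}^{(k)}\le 2F_n^{(k)}$. However, the direct induction above does not need it.

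An equally short alternative is a telescoping argument via $F_{j+2}^{(k)}\ge F_{j+1}^{(k)}+F_{j}^{(k)}$, but the induction is cleaner.

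Note also that the hypothesis $n\ge k-1$ is necessary. For $n\le k-3$ both sides vanish, so the strict inequality fails.
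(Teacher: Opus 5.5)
Your induction is correct and matches the paper's argument: the same base case $1<2$ at $n=k-1$, and the same inductive step of adding $F_{n+1}^{(k)}$ and using $F_{n+1}^{(k)}+F_{n+2}^{(k)}\le F_{n+3}^{(k)}$, which follows from the recurrence and nonnegativity. One correction to your closing remark: at $n=k-3$ the right-hand side is $F_{k-1}^{(k)}=1$, not $0$, and at $n=k-2$ it is $F_k^{(k)}=1$, so the inequality holds for all $n\ge\max(0,k-3)$ and fails only for $n\le k-4$; the hypothesis $n\ge k-1$ is sufficient but not necessary.
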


\begin{proof}
We proceed by induction. If $n=k-1$, we see that $F_{k-1}^{(k)}=1<2=F_{k+1}^{(k)}$.	
Assume now that the statement is true for a given value of $n\ge k-1$. Then
$$
\sum_{j=0}^{n+1}F_{j}^{(k)}=\sum_{j=0}^{n}F_{j}^{(k)}+F_{n+1}^{(k)}<F_{n+2}^{(k)}+F_{n+1}^{(k)},
$$	
which does not exceed $F_{n+3}^{(k)}$. Indeed, the inequality 
$F_{n+1}^{(k)}+F_{n+2}^{(k)}\le F_{n+3}^{(k)}$
follows from the non-negativity of the sequence $(F_{n}^{(k)})$, in combination with the defining recurrence rule (\ref{reck}).
\end{proof}

Until this point, all the priorly established results provided inequalities for sums of the $k$-generalized Fibonacci numbers. The next statement provides an explicit and closed formula for the first terms of the $k$-generalized Fibonacci sequence.

\begin{theorem}\label{sumk}
	\begin{equation}\label{ksum}
	\sum_{j=0}^{n}F_{j}^{(k)}=\frac{F_{n+k-1}^{(k)}+F_{n}^{(k)}-1-\sum_{i=1}^{k-3}iF_{n+k-2-i}^{(k)}}{k-1}.
	\end{equation}
\end{theorem}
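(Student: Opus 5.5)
Write $F_j=F_j^{(k)}$ and $S_n=\sum_{j=0}^{n}F_j$. The plan is to prove the equivalent statement
$$(k-1)S_n = F_{n+k-1}+F_n-1-\sum_{i=1}^{k-3} iF_{n+k-2-i}$$
by induction on $n$. Since both sides satisfy first-order relations in $n$, I will show that the right-hand side, call it $R_n$, satisfies $R_{n}-R_{n-1}=(k-1)F_n$, and then check one base case. The natural base is $n=0$. There $S_0=F_0$, which equals $0$ for $k\ge3$; the case $k=2$ is the classical $S_n=F_{n+2}-1$, with $F_0=0,F_1=1$, and can be handled the same way.

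\textbf{Base case.} For $n=0$ the right side is $F_{k-1}+F_0-1-\sum_{i=1}^{k-3} iF_{k-2-i}$. Every index $k-2-i$ with $1\le i\le k-3$ lies in $[1,k-3]$, where $F=0$. Also $F_{k-1}=1$ and $F_0=0$ (for $k\ge3$), so $R_0=0=(k-1)S_0$. For $k=2$ we get $F_1+F_0-1=0$ as well.

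\textbf{Inductive step.} I compute
$$R_n-R_{n-1}=F_{n+k-1}-F_{n+k-2}+F_n-F_{n-1}-\sum_{i=1}^{k-3} i\,(F_{n+k-2-i}-F_{n+k-3-i}).$$
Summation by parts on the last sum telescopes it. Writing $m=n+k-2$, we have $\sum_{i=1}^{k-3} i(F_{m-i}-F_{m-i-1})=\sum_{i=1}^{k-3}F_{m-i}-(k-3)F_{m-k+2}$, and $F_{m-k+2}=F_n$. Hence
$$R_n-R_{n-1}=F_{n+k-1}-F_{n+k-2}+F_n-F_{n-1}-\sum_{j=n+1}^{n+k-3}F_j+(k-3)F_n .$$
Now apply the recurrence (\ref{reck}) to get $F_{n+k-1}=\sum_{j=n-1}^{n+k-2}F_j$. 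Then $F_{n+k-1}-F_{n+k-2}-\sum_{j=n+1}^{n+k-3}F_j=F_{n-1}+F_n$. The total becomes $F_{n-1}+F_n+F_n-F_{n-1}+(k-3)F_n=(k-1)F_n$, as required.

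The recurrence must be valid at index $n+k-1$, i.e.\ $n\ge1$, which holds throughout the inductive step. The only delicate point I expect is the bookkeeping in the summation by parts: getting the boundary term $(k-3)F_n$ with the correct sign, and checking the small cases $k=2,3$, where the sum $\sum_{i=1}^{k-3}$ is empty. For those cases the same computation goes through with that sum omitted, and the $(k-3)F_n$ term vanishes or becomes negative consistently. Dividing by $k-1$ then gives (\ref{ksum}).
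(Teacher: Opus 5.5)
Your proposal is correct and is essentially the paper's argument: induction on $n$ from the base case $n=0$, with the inductive step reduced to the defining recurrence. You package the step as $R_n-R_{n-1}=(k-1)F_n$ via summation by parts, while the paper adds $F_{n+1}$ to the formula for $n$ and rewrites $F_{n+k-1}$ as $F_{n+k}-F_{n+k-2}-\cdots-F_{n}$.

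One small cleanup: the boundary term $(k-3)F_n$ from your summation by parts is valid only for $k\ge3$ under the usual empty-sum convention. For $k=2$, simply compute $R_n-R_{n-1}=F_{n+1}-F_{n-1}=F_n$ directly; note also that $F_0=0$ holds for $k=2$ as well.
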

\begin{proof}

	As usual, we proceed by induction on $n$. Firstly, if $n=0$, then 
	$$
	F_{0}^{(k)}=\frac{F_{k-1}^{(k)}+F_{0}^{(k)}-1-\sum_{i=1}^{k-3}iF_{k-2-i}^{(k)}}{k-1}=\frac{1-0-1-0}{k-1}=0.
	$$
	Assume now that (\ref{ksum}) holds for some $n\ge0$. Then for $n+1$ we have
\begin{eqnarray*}
	\sum_{j=0}^{n+1}F_{j}^{(k)}&=&F_{n+1}^{(k)}+\frac{F_{n+k-1}^{(k)}+F_{n}^{(k)}-1-\sum_{i=1}^{k-3}iF_{n+k-2-i}^{(k)}}{k-1}\\
	&=&\frac{(k-1)F_{n+1}^{(k)}+F_{n+k-1}^{(k)}+F_{n}^{(k)}-1-\sum_{i=1}^{k-4}iF_{n+k-2-i}^{(k)}-(k-3)F_{n+1}^{(k)}}{k-1}\\
	&=&\frac{2F_{n+1}^{(k)}+(F_{n+k}^{(k)}-F_{n+k-2}^{(k)}-\cdots-F_{n+1}^{(k)}-F_{n}^{(k)})+F_{n}^{(k)}-1-\sum_{i=1}^{k-4}iF_{n+k-2-i}^{(k)}}{k-1}\\
	&=&\frac{F_{(n+1)+k-1}^{(k)}+F_{n+1}^{(k)}-1-\sum_{i=1}^{k-3}iF_{(n+1)+k-2-i}^{(k)}}{k-1}.	
\end{eqnarray*}
In the above computation, we used only the recurrence rule of the $k$-generalized Fibonacci sequence line (3), and straightforward algebraic simplifications.
\end{proof}

\begin{corollary}\label{C1}
For small values of $k$ we derive the following formulae.	
\begin{itemize}
	\item If $k=2$, then the well-know Fibonacci identity (with $F_n=F_n^{(2)}$) follows (see page 203, \cite{L}):
	$$
	\sum_{j=0}^{n}F_{j}=F_{n+1}+F_n-1=F_{n+2}-1.
	$$
	\item For $k=3$ in Theorem \ref{sumk}, with the notation $T_n=F_n^{(3)}$ we recover the well-known formula 
	$$
	\sum_{j=0}^nT_j=\frac{T_{n+2}+T_n-1}{2},
	$$
	see \cite{K} (where the author used a shifted version of Tribonacci sequence).\\
	
	\item For $k=4$ we derive
	$$
	\sum_{j=0}^nF_{j}^{(4)}=\frac{F_{n+3}^{(4)}+F_{n}^{(4)}-1-F_{n+1}^{(4)}}{3}.
	$$
\end{itemize}
\end{corollary}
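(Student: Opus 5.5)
The plan is to specialize Theorem \ref{sumk} directly to $k=2,3,4$. The only thing to watch is the correction sum $\sum_{i=1}^{k-3} iF_{n+k-2-i}^{(k)}$: it has $k-3$ terms, so it is empty, hence zero, for $k=2$ and $k=3$. For $k=4$ it has exactly one term, $i=1$, namely $F_{n+1}^{(4)}$. No induction is needed; the work is just substitution plus one small simplification in the Fibonacci case.

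For $k=2$, the denominator is $k-1=1$ and the correction sum is empty. Theorem \ref{sumk} then gives $\sum_{j=0}^n F_j = F_{n+1}+F_n-1$. The defining recurrence (\ref{reck}) with $k=2$ says $F_{n+2}=F_{n+1}+F_n$ for $n\ge0$, which rewrites the right-hand side as $F_{n+2}-1$. Here the convention $F_0^{(2)}=0$, $F_1^{(2)}=1$ agrees with the usual Fibonacci indexing, so this is the classical identity.

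For $k=3$, the denominator is $2$ and the sum is again empty. With $n+k-1=n+2$ we obtain $\sum_{j=0}^n T_j=(T_{n+2}+T_n-1)/2$ at once. For $k=4$, the denominator is $3$, $n+k-1=n+3$, and the single correction term is $1\cdot F_{n+4-2-1}^{(4)}=F_{n+1}^{(4)}$. This yields the stated formula.

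There is no real obstacle here. The one point that needs care is the edge convention for the empty sum in the cases $k=2$ (upper limit $-1$) and $k=3$ (upper limit $0$). These should be read as $0$, which is the same convention already used in the base case of the proof of Theorem \ref{sumk}. As a sanity check one could verify small $n$, e.g.\ $n=2$ for $k=3$: $T_0+T_1+T_2=1$ and $(T_4+T_2-1)/2=(2+1-1)/2=1$.
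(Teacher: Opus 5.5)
Your proposal is correct and is essentially the paper's argument: the corollary is just Theorem~\ref{sumk} with $k=2,3,4$ substituted. The correction sum is empty for $k=2,3$ and reduces to the single term $F_{n+1}^{(4)}$ for $k=4$. In the Fibonacci case, $F_{n+1}+F_n=F_{n+2}$ then gives the second form.
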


In the discussion that follows, we recall a ``structural identity" for linear recurrences which will allow us to relate a homogeneous recurrence to its inhomogeneous counterpart. This identity may be viewed as a discrete analogue of the ``variation of parameters" method. More specifically, we express the solution of an inhomogeneous recurrence in terms of the associated homogeneous sequence and forcing terms. The result enables us to obtain explicit expressions for the random inhomogeneous Tribonacci sequence we will be studying later.\\

Given a positive integer $\ell$ and complex numbers $f_0,f_1,\dots,f_{\ell-1}$, define the linear homogeneous recurrence  
\begin{equation}\label{f}
	f_n=A_1f_{n-1}+A_2f_{n-2}+\cdots+A_\ell f_{n-\ell}\qquad (n\ge\ell),
\end{equation}
where the coefficients $A_1,\dots,A_{\ell-1},A_\ell\ne0$ are fixed complex numbers. Moreover suppose that $(w_n)\in\mathbb{C}^\infty$ is an arbitrary sequence. Consider now the ``associated" linear recurrence sequence $(G_n)$ constructed via
\begin{equation}\label{G}
	G_n=A_1G_{n-1}+A_2G_{n-2}+\cdots+A_\ell G_{n-\ell}+w_{n-\ell}\qquad(n\ge\ell),
\end{equation}
where the complex initial values $G_0,\dots,G_{\ell-1}$ are assumed to be given. Note that formulae (\ref{f}) and (\ref{G}) essentially differ only in the terms of $(w_n)$.\\

Theorem 1 of \cite{BRSz} has the following statement.

\begin{theorem}\label{Sopron}
For $n\ge \ell$ the terms of the sequences $\{f_n\}$, $\{w_n\}$, and $\{G_n\}$ satisfy the identity
\begin{equation*}\label{geneq}
	\sum_{j=0}^{\ell-1}f_jG_{n+\ell-j}=\sum_{j=0}^{\ell-1}\sum_{i=0}^{\ell-1-j}f_{n-j}A_{j+1+i}G_{\ell-1-i}+
	\sum_{j=0}^{\ell-2}\sum_{i=1}^{\ell-1-j}f_{j}A_{i}G_{n+\ell-j-i}+\sum_{j=0}^{n}f_{n-j}w_j.
\end{equation*}
\end{theorem}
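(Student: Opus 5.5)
The plan is to prove the identity directly by a weighted telescoping (summation-by-parts) argument, with no generating functions. The idea is to pair each instance of the inhomogeneous recurrence (\ref{G}) with a term of the homogeneous sequence $(f_n)$, so that the recurrence (\ref{f}) cancels almost everything. Concretely, for $m\ge\ell$ write (\ref{G}) as
$$
G_m-\sum_{i=1}^{\ell}A_iG_{m-i}=w_{m-\ell}.
$$
Multiply this by $f_{n+\ell-m}$ and sum over $m=\ell,\dots,n+\ell$. The right-hand side becomes $\sum_{m=\ell}^{n+\ell}f_{n+\ell-m}w_{m-\ell}=\sum_{j=0}^{n}f_{n-j}w_j$, which is exactly the last term of the claimed identity.

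The main work is to expand the left-hand side and collect, for each index $t$, the total coefficient of $G_t$. The term $G_m$ contributes $f_{n+\ell-t}$ when $t=m$, and this requires $\ell\le t\le n+\ell$. The term $-A_iG_{m-i}$ contributes $-A_if_{n+\ell-t-i}$ when $t=m-i$, and this requires $\ell\le t+i\le n+\ell$. I will split the range of $t$ into three regions.
\begin{itemize}
\item \emph{Interior, $\ell\le t\le n$.} Every $i=1,\dots,\ell$ is admissible, and $n+\ell-t\ge\ell$. The coefficient is therefore $f_{n+\ell-t}-\sum_{i=1}^{\ell}A_if_{n+\ell-t-i}$, which vanishes by (\ref{f}).
\item \emph{Top, $t=n+\ell-j$ with $0\le j\le\ell-1$.} Only the $i$ with $i\le j$ survive the constraint $t+i\le n+\ell$. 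So $G_{n+\ell-j}$ carries the coefficient $f_j-\sum_{i=1}^{j}A_if_{j-i}$.
\item \emph{Bottom, $0\le t\le\ell-1$.} Here $G_t$ is an initial value. It appears only through terms $-A_iG_{m-i}$ with $i\ge\ell-t$, each with coefficient $-A_if_{n+\ell-t-i}$.
\end{itemize}

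After this decomposition, the identity follows by moving the bottom terms to the right-hand side and reindexing. Setting $t=\ell-1-i'$ and $i=j+1+i'$ turns the bottom terms into the double sum $\sum_{j}\sum_{i'}f_{n-j}A_{j+1+i'}G_{\ell-1-i'}$. The negative parts of the top coefficients, namely $\sum_{j}\sum_{i\le j}A_if_{j-i}G_{n+\ell-j}$, also move to the right. The substitution $j\mapsto j+i$ then gives $\sum_{j}\sum_{i}f_jA_iG_{n+\ell-j-i}$, with $j$ running over $0,\dots,\ell-2$ and $i$ over $1,\dots,\ell-1-j$. What remains on the left is exactly $\sum_{j=0}^{\ell-1}f_jG_{n+\ell-j}$.

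I expect the only real obstacle to be this bookkeeping of summation ranges. In particular, the interior cancellation needs $n\ge\ell$, or more precisely that the interior and boundary ranges do not overlap badly. For $n$ close to $\ell$ I would check that the top and bottom regions are disjoint, since $n+1>\ell-1$. Finally, I would verify the statement against a small case such as $\ell=1$, where it reduces to $f_0G_{n+1}=f_nA_1G_0+\sum_j f_{n-j}w_j$, to confirm that the reindexed limits match those in the statement.
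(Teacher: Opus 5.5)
Your proof is correct. Multiplying the instance of (\ref{G}) at index $m$ by $f_{n+\ell-m}$, summing over $\ell\le m\le n+\ell$ and collecting the coefficient of each $G_t$ gives exactly your three regions. The interior coefficients vanish by (\ref{f}). Your two reindexings land on the two triangular double sums of the statement: $t=\ell-1-i'$, $i=j+1+i'$ for the bottom terms, and $j\mapsto j+i$ for the top terms. The first of these needs only an interchange of summation order over the triangle $\{j,i'\ge 0,\ j+i'\le \ell-1\}$.

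The paper gives no proof of this identity; it simply imports it as Theorem~1 of \cite{BRSz}. So your telescoping argument is a self-contained replacement rather than a variant of anything in the text. It also shows that $A_\ell\ne 0$ plays no role, and that the identity holds over any commutative ring.

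One correction to your bookkeeping remarks. The interior cancellation needs no hypothesis: it holds for every $t\in[\ell,n]$, and that range is merely empty when $n<\ell$. The assumption $n\ge\ell$ (in fact $n\ge \ell-1$ suffices) is used at the two boundaries.
\begin{itemize}
\item In the bottom region, it guarantees $t+i\le 2\ell-1\le n+\ell$ for every $i$ with $\ell-t\le i\le \ell$. So the full range of $i$ really occurs, which your description assumes silently.
\item In the top region, it guarantees $t=n+\ell-j\ge n+1\ge \ell$. So $G_t$ is actually produced by the first sum with coefficient $f_j$.
\end{itemize}
With these two observations written out explicitly, the argument is complete.
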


For a very specific set of initial conditions, the following corollary is deduced.

\begin{corollary}\label{C2}
Let $f_0=f_1=\dots=f_{\ell-2}=0$, $f_{\ell-1}=1$, and similarly, $G_0=G_1=\dots=G_{\ell-2}=0$, $G_{\ell-1}=1$, moreover let $A_1=A_2=\cdots=A_\ell=1$. Assuming a constant sequence $w_n=w$, we have that for all $n\ge\ell$

	\begin{equation*}\label{eC2}
		G_{n}=\sum_{j=n-\ell}^{n-1}f_j+w\sum_{j=0}^{n-1}f_{j}.
	\end{equation*}
\end{corollary}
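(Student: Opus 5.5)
We specialize Theorem \ref{Sopron} to the data of the corollary: $A_1=\cdots=A_\ell=1$, $f_0=\dots=f_{\ell-2}=0$, $f_{\ell-1}=1$, the same initial values for $(G_n)$, and $w_j=w$ for all $j$. Then $(f_n)$ is exactly the $\ell$-generalized Fibonacci sequence $F^{(\ell)}_n$. On the left-hand side of Theorem \ref{Sopron}, only the index $j=\ell-1$ survives, which leaves $G_{n+1}$. It is therefore convenient to prove the formula for $G_{n+1}$ from the identity at index $n$ and then shift indices.

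\textbf{The three sums on the right.}

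\emph{First double sum.} Since $G_{\ell-1-i}$ vanishes unless $i=0$, it collapses to $\sum_{j=0}^{\ell-1}f_{n-j}$. This is $f_{n-\ell+1}+\cdots+f_n$, i.e.\ the window of $\ell$ consecutive terms ending at index $n$.

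\emph{Second double sum.} Here $j\le\ell-2$, so $f_j=0$ for every term and the whole sum vanishes.

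\emph{Third sum.} It equals $w\sum_{j=0}^{n}f_{n-j}=w\sum_{j=0}^n f_j$.

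\textbf{Conclusion.} Combining the three pieces gives
\[
G_{n+1}=\sum_{j=n+1-\ell}^{n}f_j+w\sum_{j=0}^{n}f_j .
\]
Replacing $n+1$ by $n$ yields the stated formula $G_n=\sum_{j=n-\ell}^{n-1}f_j+w\sum_{j=0}^{n-1}f_j$. Theorem \ref{Sopron} gives this for $n\ge\ell+1$.

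\textbf{The remaining index $n=\ell$.} The one delicate point is this bookkeeping of the range, since the shift means the theorem as quoted does not directly cover $n=\ell$. I would check it directly: by the recurrence, $G_\ell=G_{\ell-1}+w_0=1+w$. The right-hand side gives $\sum_{j=0}^{\ell-1}f_j+w\sum_{j=0}^{\ell-1}f_j=1+w$, so the formula also holds at $n=\ell$.

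\textbf{Alternative route.} I would also note an independent check that avoids Theorem \ref{Sopron}. Set $H_n=\sum_{j=n-\ell}^{n-1}f_j+w\sum_{j=0}^{n-1}f_j$. Using the recurrence for $f$, one verifies by induction that $H_n$ satisfies $H_n=H_{n-1}+\cdots+H_{n-\ell}+w$ and has the correct starting values. This gives a second proof if the index conventions of Theorem \ref{Sopron} turn out to be off by one.
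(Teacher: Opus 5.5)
Your proposal is correct and follows the paper's route. The paper deduces the corollary by specializing Theorem \ref{Sopron}: only $f_{\ell-1}G_{n+1}$ survives on the left, only $i=0$ survives in the first double sum, the second double sum vanishes, and the forcing sum becomes $w\sum_{j=0}^{n}f_j$, after which one shifts $n+1\mapsto n$. Your direct check of the boundary case $n=\ell$, where $G_\ell=1+w$, correctly supplies the index bookkeeping that the paper leaves implicit.
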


Additionally, by letting $\ell  = 3$ in the above Corollary \ref{C2}, in other words, ~$f_n=T_n$, and then applying Corollary \ref{C1}, we get the following corollary.

\begin{corollary}
\begin{equation*}\label{eC2}
	G_{n}=T_{n}+w\frac{T_{n+1}+T_{n-1}-1}{2}.
\end{equation*}	
\end{corollary}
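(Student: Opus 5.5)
Take $\ell=3$ in Corollary~\ref{C2}. Then $f_n=T_n$, and the hypotheses $G_0=G_1=0$, $G_2=1$, $A_1=A_2=A_3=1$ and $w_n=w$ constant are exactly the setting of (\ref{rule}) with a deterministic forcing term. So for $n\ge 3$,
$$
G_n=(T_{n-3}+T_{n-2}+T_{n-1})+w\sum_{j=0}^{n-1}T_j .
$$
Two simplifications then give the formula.

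\emph{First term.} By the Tribonacci recurrence, valid since $n\ge3$, we have $T_{n-1}+T_{n-2}+T_{n-3}=T_n$.

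\emph{Second term.} Apply the $k=3$ case of Corollary~\ref{C1} with $n$ replaced by $n-1$:
$$
\sum_{j=0}^{n-1}T_j=\frac{T_{n+1}+T_{n-1}-1}{2}.
$$
This requires $n-1\ge 0$, which holds. Substituting both expressions gives $G_n=T_n+w\,\frac{T_{n+1}+T_{n-1}-1}{2}$.

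There is no real obstacle; the only care needed is the index bookkeeping. As a sanity check, I would verify $n=3$ directly. The recurrence gives $G_3=G_2+G_1+G_0+w_0=1+w$. The formula gives $T_3+w\,(T_4+T_2-1)/2=1+w\,(2+1-1)/2=1+w$, and the two agree.

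I would also remark that the identity holds for $n=2$, since there it reads $1+w\cdot 0$, although the statement only requires $n\ge 3$.
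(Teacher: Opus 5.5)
Your proof is correct and follows the paper's own argument: specialize Corollary~\ref{C2} to $\ell=3$ (so $f_n=T_n$), use the Tribonacci recurrence to collapse $T_{n-3}+T_{n-2}+T_{n-1}$ to $T_n$, and apply the $k=3$ case of Corollary~\ref{C1} with $n$ replaced by $n-1$. You have simply made explicit the index shift and the collapse of the first sum, which the paper leaves implicit.
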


Proceeding, for $w=a$ or $w=b$, we use the above formula to then define the quantities:
$$
m_n(a)=T_{n}+a\frac{T_{n+1}+T_{n-1}-1}{2}\quad{\text{and}}\quad m_n(b)=T_{n}+b\frac{T_{n+1}+T_{n-1}-1}{2}.
$$

Having laid down all of the foundational preamble, we can now finally move on to the proofs of our main theorems.

\section{The k-generalized Fibonacci sequence and certain representations}

In this section, we address the first partition problem from the Introduction.

We begin with the formal definition of the set of $k$-Fibonacci representations of the number zero with the possible coefficients having value from $\{-1,0,1\}$. Towards this, let $n\ge k-1$ and 
\begin{equation*}
	R^{(k)\star}_n=\{(\varepsilon_{k-1},\varepsilon_{k},\dots,\varepsilon_n)\;|\;\varepsilon_{k-1}F_{k-1}^{(k)}+\varepsilon_{k}F_{k}^{(k)}+\cdots+\varepsilon_nF_n^{(k)}=0, \varepsilon_i\in\{-1,0,1\}\}.
\end{equation*}
We further define $R_n^{(k)}\subset (R^{(k)\star}_n\setminus(0,0,\dots,0\
))$ with the property that: if $z$ is the largest subscript in $(\varepsilon_{k-1},\varepsilon_{k},\dots,\varepsilon_n)$ such that $\varepsilon_z$ is non-zero, then $\varepsilon_z=1$. Finally, put $r^\star_n=|R^{(k)\star}_n|$ and $r_n=|R^{(k)}_n|$, where, for brevity, if there is no risk of confusion, we omit the parameter $k$ in the cardinalities. \\

In the previous discussion, we introduced the set $R^{(k)}_n$ specifically in order to address the ``symmetrically" trivial cases: If $(\varepsilon_{k-1},\varepsilon_{k},\dots,\varepsilon_n)=(0,0,\dots,0)$, then we get a trivial solution to $\varepsilon_{k-1}F_{k-1}^{(k)}+\varepsilon_{k}F_{k}^{(k)}+\cdots+\varepsilon_nF_n^{(k)}=0$, whereas if $\sum_{j=k-1}^n\varepsilon_jF^{(k)}_j=0$, then $\sum_{j=k-1}^n(-\varepsilon_j)F^{(k)}_j=0$ as well. It is thus apparent that $r^\star_n=2r_n+1$.\\

The following lemma expresses a dominance property of 
$k$-generalized Fibonacci numbers analogous to the inequalities used in Zeckendorf-type uniqueness proofs. Ro\-ughly speaking, once a sufficiently large term appears (in the representation) with positive sign, it forces a rigid configuration among the preceding coefficients. While interesting in its own sake, the lemma will be crucial in our proof of Theorem \ref{final}.

\begin{lemma}\label{crucial}
	Assume that $(\varepsilon_{k-1},\varepsilon_{k},\dots,\varepsilon_n)$ is an element of $R_n^{(k)}$. If $\varepsilon_z=1$ holds for some $2k-1\le z\le n$ such that $\sum_{j=z+1}^n\varepsilon_jF^{(k)}_j=0$, then
	\begin{enumerate}
		\item $\varepsilon_{z-1}=\varepsilon_{z-2}=\cdots=\varepsilon_{z-k+1}=-1$,
		\item $\varepsilon_{z-k}\ne1$.
	\end{enumerate}
\end{lemma}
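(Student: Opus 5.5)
The plan is to use the hypothesis $\sum_{j=z+1}^n\varepsilon_jF^{(k)}_j=0$, which reduces the defining relation to
\[
F^{(k)}_z=-\sum_{j=k-1}^{z-1}\varepsilon_jF^{(k)}_j .
\]
We then compare the size of $F^{(k)}_z$ with the largest values the right-hand side can reach. We write $F_j$ for $F^{(k)}_j$ throughout. The key identity is the recurrence (\ref{reck}):
\[
F_z=F_{z-1}+\cdots+F_{z-k}.
\]
This shows that $F_z$ exceeds $F_{z-1}+\cdots+F_{z-k+1}$ by exactly $F_{z-k}$. To balance, the negative part of the lower coefficients must therefore supply nearly all of $F_z$.

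\textbf{Part (1).} Suppose, for contradiction, that $\varepsilon_{m}\ne-1$ for some $m\in\{z-k+1,\dots,z-1\}$. Then
\[
-\sum_{j=k-1}^{z-1}\varepsilon_jF_j\le \sum_{j=k-1}^{z-1}F_j - F_m.
\]
We compare this with $F_z=\sum_{j=z-k}^{z-1}F_j$, so we must show
\[
\sum_{j=k-1}^{z-k-1}F_j < F_m .
\]
Since $m\ge z-k+1$, Lemma \ref{l2} gives
\[
\sum_{j=0}^{z-k-1}F_j<F_{z-k+1}\le F_m,
\]
provided $z-k-1\ge k-1$, which is exactly the assumption $z\ge 2k$. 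We also use that $F_{k-1}\ge$ the sum of the zero terms below it. This gives the strict contradiction $-\sum\varepsilon_jF_j<F_z$.

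The boundary case $z=2k-1$ must be checked separately. Here $z-k-1=k-2$, so the left sum is empty and equals $0$, while $F_m\ge 1$ for $m\ge k$. So the inequality still holds trivially. I will double-check the indexing of Lemma \ref{l2} here, since it requires $n\ge k-1$, and the empty or zero-sum case is handled directly.

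\textbf{Part (2).} With $\varepsilon_{z-1}=\cdots=\varepsilon_{z-k+1}=-1$ established, substituting into the relation leaves
\[
F_{z-k}=-\sum_{j=k-1}^{z-k}\varepsilon_jF_j .
\]
If $\varepsilon_{z-k}=1$, this becomes
\[
2F_{z-k}=-\sum_{j=k-1}^{z-k-1}\varepsilon_jF_j\le\sum_{j=k-1}^{z-k-1}F_j .
\]
By Lemma \ref{l2} (with $n=z-k-2$, valid when $z-k-2\ge k-1$), the right side is less than $F_{z-k}$, a contradiction because $F_{z-k}>0$. When $z-k-2<k-1$, that is $z\in\{2k-1,2k\}$, we check directly. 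For $z=2k$ the right side is $F_{k-1}=1<2=2F_k$. For $z=2k-1$ the sum is empty, giving $2F_{k-1}=2\le 0$, which is false.

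The main obstacle is the careful bookkeeping at the small indices near $z=2k-1$. There Lemma \ref{l2} is not directly applicable, the zero initial terms $F_0,\dots,F_{k-2}$ sit outside the index range of the tuple, and one must confirm that the strict inequalities survive. The general step is a direct dominance comparison.
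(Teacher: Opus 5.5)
Your Part (1) is correct and is essentially the paper's argument. The paper inducts on $t$, but its key inequality $\sum_{j=k-1}^{z-k-1}F_j<F_{z-t-1}$ is yours with $m=z-t-1$; bounding every other coefficient by $+F_j$ simply lets you skip the induction.

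The problem is in Part (2). From $2F_{z-k}\le\sum_{j=k-1}^{z-k-1}F_j$ you assert, via Lemma~\ref{l2} with $n=z-k-2$, that the right side is less than $F_{z-k}$. That is an off-by-one misapplication: Lemma~\ref{l2} at $n=z-k-2$ bounds only $\sum_{j=0}^{z-k-2}F_j$, while your sum also contains $F_{z-k-1}$. Moreover, the claimed inequality is false throughout your range $z\ge 2k+1$. For $z-k\ge k$ the recurrence gives $F_{z-k}=\sum_{j=z-2k}^{z-k-1}F_j\le\sum_{j=k-1}^{z-k-1}F_j$; for example, with $k=2$ and $z=6$ you get $F_1+F_2+F_3=4>3=F_4$. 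So as written, Part (2) yields no contradiction.

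The missing ingredient is the factor $2$ on the left, which is what Lemma~\ref{l1} supplies. Lemma~\ref{l2} with $n=z-k-1$ (valid for $z\ge 2k$) together with Lemma~\ref{l1} gives $\sum_{j=k-1}^{z-k-1}F_j<F_{z-k+1}\le 2F_{z-k}$, and this is exactly how the paper concludes. Alternatively, split off the top term: $\sum_{j=k-1}^{z-k-2}F_j+F_{z-k-1}<F_{z-k}+F_{z-k-1}\le 2F_{z-k}$, which is what your choice $n=z-k-2$ actually supports. Your direct checks for $z=2k-1$ and $z=2k$ are correct, and there you rightly compare with $2F_k$. With this repair the proof is complete.
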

\begin{proof}
If $(\varepsilon_{k-1},\varepsilon_{k},\dots,\varepsilon_n)\in R_n^{(k)}$ and $\sum_{j=z+1}^n\varepsilon_jF^{(k)}_j=0$, then $\sum_{j=k-1}^z\varepsilon_jF^{(k)}_j=0$. Henceforth we shall refer to this conclusion as the ``modified" zero-condition.\\

Proceeding, given the vector $(\varepsilon_{k-1},\varepsilon_{k},\dots,\varepsilon_n)$, we first isolate the (sub)-vector \newline $(\varepsilon_{k-1},\varepsilon_{k},\dots,\varepsilon_z)$. We further split this new vector into two parts: $(\varepsilon_{k-1},\dots,\varepsilon_v)$ and $(\varepsilon_{v+1},\dots,\varepsilon_z)$. As mentioned in the remark before the statement of the lemma, we will rigorously make use of the intuition that, if the corresponding second part  
$$\sum_{j=v+1}^z\varepsilon_jF^{(k)}_j=V$$ 
is a large enough positive integer, then it would not be possible to ``compensate" $V$ in order to get zero as total sum, even if we set all the coefficients $\varepsilon_i=-1$ 
in the preceding first part. In the proof, this is encapsulated by the strict inequality

$$
\sum_{j=k-1}^vF^{(k)}_j<\,V=\sum_{j=v+1}^z\varepsilon_jF^{(k)}_j.
$$ 

We can now verify the two parts of the theorem.\\

\noindent \textbf{Part (1)}.
\vspace{2mm}

Let us first show that $\varepsilon_{z-1}=-1$. Suppose to the contrary that $\varepsilon_{z-1}=0$ or $1$. This immediately leads to a contradiction of the modified zero-condition since

$$
\sum_{j=k-1}^{z-2}\varepsilon_jF^{(k)}_j\le \sum_{j=k-1}^{z-2}F^{(k)}_j<F^{(k)}_z\le\varepsilon_{z-1}F^{(k)}_{z-1}+F^{(k)}_z,
$$ 
where the second inequality is the direct application of Lemma \ref{l2}.

Now suppose general, that $\varepsilon_{z}=1$ and $\varepsilon_{z-1}=\varepsilon_{z-2}=\cdots=\varepsilon_{z-t}=-1$ for some $t\in\{1,2,\dots,k-2\}$. We would like to show that $\varepsilon_{z-t-1}=-1$ also holds.

To this end we suppose $\varepsilon_{z-t-1}\in\{0,1\}$. It would suffice to show that

\begin{equation}\label{lhs}
	\sum_{j=k-1}^{z-t-2}\varepsilon_jF^{(k)}_j\le \sum_{j=k-1}^{z-t-2}F^{(k)}_j=\sum_{j=k-1}^{z-k-1}F^{(k)}_j+\sum_{j=z-k}^{z-t-2}F^{(k)}_j
\end{equation}
is smaller than
$$
\sum_{j=z-t}^{z-1}(-1)F^{(k)}_j+F^{(k)}_z\le \varepsilon_{z-t-1}F^{(k)}_{z-t-1}+\sum_{j=z-t}^{z-1}(-1)F^{(k)}_j+F^{(k)}_z.
$$
Note that first summand on the right-hand side of (\ref{lhs}) might be empty.
For the left-hand side of the previous inequality we see
\begin{equation}\label{rhs}
	\sum_{j=z-t}^{z-1}(-1)F^{(k)}_j+F^{(k)}_z=\sum_{j=z-k}^{z-t-2}F^{(k)}_j+F^{(k)}_{z-t-1}.
\end{equation}

Removing the common sum $\sum_{j=z-k}^{z-t-2}F^{(k)}_j$ from both (\ref{lhs}) and (\ref{rhs}), we are left with showing that
$$
\sum_{j=k-1}^{z-k-1}F^{(k)}_j<F^{(k)}_{z-t-1}.
$$ 
However, this is true since, by Lemma \ref{l2} and the condition $t\le k-2$, we have
$$
\sum_{j=k-1}^{z-k-1}F^{(k)}_j<F^{(k)}_{z-k+1}\le F^{(k)}_{z-t-1}
$$
if $\sum_{j=k-1}^{z-k-1}F^{(k)}_j$ a non-empty sum, otherwise trivial.\\

\noindent \textbf{Part (2)}.
\vspace{2mm}

The second part can be proved via the same strategy as before, indeed, we show that, assuming $\varepsilon_{z-k}=1$ leads to a contradiction. More precisely, we show that if

\begin{equation}\label{lhsnew}
	\sum_{j=k-1}^{z-k-1}\varepsilon_jF^{(k)}_j\le \sum_{j=k-1}^{z-k-1}F^{(k)}_j<
	F^{(k)}_{z-k}+(-1)\!\!\!\sum_{j=z-k+1}^{z-1}F^{(k)}_j+F^{(k)}_z,
\end{equation} 
then we get a contradiction. The right hand side of (\ref{lhsnew}) is $2F^{(k)}_{z-k}$, and the above inequality is true since the inequality
$$
\sum_{j=k-1}^{z-k-1}F^{(k)}_j<F^{(k)}_{z-k+1}<2F^{(k)}_{z-k}
$$
holds by applying Lemma \ref{l2}, Lemma \ref{l1} and the condition $z-k\ge k-1$, equivalently $z\ge2k-1$.
\end{proof}

Having completed the proof of Lemma \ref{crucial}, we turn our attention to establishing recurrence relations for the terms of the sequences $(r_n)$ and $(r_n^\star)$.

Towards this, consider the last term in the vanishing sum $0=\sum_{j=k-1}^n\varepsilon_jF^{(k)}_j$. This is either 0 or 1 according to the definition of $R_n^{(k)}$.

If $\varepsilon_n=0$, then this case provides $r_{n-1}$ solutions. On the other hand, the situation arising from
\begin{equation}\label{ep=1}
	\varepsilon_n=1
\end{equation} 
requires a more delicate analysis. Applying Lemma \ref{crucial} with $z=n$, we have $\varepsilon_{n-k}$ is either $-1$ or $0$, with the intermediate terms $\varepsilon_i=-1$ for $i=n-k+1,\dots,n-1$. For ease of illustration, we again encode the coefficients via $\{-1, 0, 1 \}$-vectors.\\

{\bf Case 1:} If $\varepsilon_{n-k}=-1$. The complete sum associated to the vector
$$(\dots,-1\overbrace{-1,-1,\dots,-1}^{k-1},1)$$
can be written in the form
$$
0=\sum_{j=k-1}^{n-k-1}\varepsilon_jF^{(k)}_j+\overbrace{(-1)\sum_{j=n-k}^{n-1}F^{(k)}_j+F^{(k)}_n}^{0}=\sum_{j=k-1}^{n-k-1}\varepsilon_jF^{(k)}_j,
$$
where we applied the defining recurrence rule (\ref{reck}) for the terms of $(F^{(k)}_n)$. Clearly, the remaining zero sum gives $r^\star_{n-k-1}$ solutions.\\

{\bf Case 2:} When $\varepsilon_{n-k}=0$. Using the same recurrence rule, the vector

$$(\dots,0,\overbrace{-1,-1,\dots,-1}^{k-1},1),$$

leads to the decomposition

$$
0=\sum_{j=k-1}^{n-k-1}\varepsilon_jF^{(k)}_j+(-1)\sum_{j=n-k+1}^{n-1}F^{(k)}_j+F^{(k)}_n=\sum_{j=k-1}^{n-k-1}\varepsilon_jF^{(k)}_j+F^{(k)}_{n-k}.
$$
Thus $(\dots,0,\overbrace{-1,-1,\dots,-1}^{k-1},1)$ is {\it formally not but identical in value} to $(\dots,1,\overbrace{0,0,\dots,0}^{k-1},0)$. We can thus consider this case as $\varepsilon_{n-k}=1$ and $\sum_{j=n-k+1}^{n}0\cdot F^{(k)}_j=0$. Applying Lemma \ref{crucial} once more, with $z=n-k$, we observe that the current scenario is equivalent to the one we started from, where $\varepsilon_n=1$ (see (\ref{ep=1})), except that $\varepsilon_{n-k}=1$ this time around. We can thus repeat the same argument we had for the case $\varepsilon_n=1$.\\

If $\varepsilon_{n-2k}=-1$, then we gain $r^\star_{n-2k-1}$ additional solutions, while $\varepsilon_{n-2k}=0$ leads analogously to $\varepsilon_{n-2k}=1$, resulting in yet another bifurcation.

Proceeding, if we let $n=qk+u$, $0\le u\le k-1$, we can iteratively repeat the same argument until the condition $2k-1\le z$ holds, from which we get ({\it identically}) that $\varepsilon_{n-qk}=\varepsilon_u=1$.

In summary, through the above process, we have actually showed that
\begin{equation}\label{star}
	r_n=r_{n-1}+r^\star_{n-k-1}+r^\star_{n-2k-1}+\cdots+r^\star_{u}.
\end{equation} 
Equation (\ref{star}) can be simplified further. Expressing $r_{n-k}$ via the equation, (that is, instead of $r_{n-1}$, since $r^\star_{u}$ in (\ref{star}) depends on the remainder of $n$ modulo $k$), we have that
\begin{equation}\label{star-}
	r_{n-k}=r_{n-k-1}+r^\star_{n-2k-1}+r^\star_{n-3k-1}+\cdots+r^\star_{u}
\end{equation}	

Subtracting (\ref{star-}) from (\ref{star}), and recalling that $r^\star_{j}=2r_j+1$, after some basic rearrangements, the equation yields
\begin{equation}\label{rrrr1}
	r_{n}=r_{n-1}+r_{n-k}+r_{n-k-1}+1.
\end{equation}

We obtain the homogeneous version of (\ref{rrrr1}) by applying (\ref{rrrr1}) to $r_{n-1}$ to obtain the corresponding equation, and then evaluating $r_n-r_{n-1}$. After simplifying, we get the desired recurrence relation having of the form
$$
r_n=2r_{n-1}-r_{n-2}+r_{n-k}-r_{n-k-2}.
$$

We have thus proved the following theorem.

\begin{theorem}\label{final}
	The terms of the sequence $(r_n)$ satisfy the recursive rule
	\begin{equation*}\label{oooh}
		r_n=2r_{n-1}-r_{n-2}+r_{n-k}-r_{n-k-2}.
	\end{equation*}
\end{theorem}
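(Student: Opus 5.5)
The plan follows the derivation set up before the statement. Throughout, $n$ is taken large enough that every index occurring below is at least $k-1$ (and $n\ge 2k-1$ so that Lemma \ref{crucial} applies at $z=n$). The proof has three stages: an inhomogeneous recurrence for $r_n$ obtained by classifying the top coefficient, a telescoping step that removes the dependence on $n \bmod k$, and a final differencing that removes the constant term.

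\textbf{Classifying the top coefficient.} Split $R_n^{(k)}$ according to $\varepsilon_n\in\{0,1\}$.
\begin{itemize}
\item If $\varepsilon_n=0$, we obtain exactly $r_{n-1}$ elements.
\item If $\varepsilon_n=1$, Lemma \ref{crucial} with $z=n$ forces $\varepsilon_{n-1}=\dots=\varepsilon_{n-k+1}=-1$ and $\varepsilon_{n-k}\in\{-1,0\}$.
\begin{itemize}
\item When $\varepsilon_{n-k}=-1$, the recurrence (\ref{reck}) gives $F^{(k)}_n=\sum_{j=n-k}^{n-1}F^{(k)}_j$, so the top block contributes zero. The prefix $(\varepsilon_{k-1},\dots,\varepsilon_{n-k-1})$ is then an arbitrary element of $R^{(k)\star}_{n-k-1}$, contributing $r^\star_{n-k-1}$.
\item When $\varepsilon_{n-k}=0$, the top block has value $F^{(k)}_{n-k}$. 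The prefix together with this value behaves exactly like a vector ending at index $n-k$ with top coefficient $1$ and zero tail, so Lemma \ref{crucial} applies again at $z=n-k$.
\end{itemize}
\end{itemize}
Iterating this descent, and writing $n=qk+u$, gives
\[
r_n=r_{n-1}+r^\star_{n-k-1}+r^\star_{n-2k-1}+\cdots+r^\star_{u},
\]
which is (\ref{star}). I will check that the chain terminates correctly at the lowest level: once $z<2k-1$, the small initial values must be handled by direct inspection, so that the last term is indeed $r^\star_u$ with the right convention for small indices.

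\textbf{Removing the dependence on $u$.} Write (\ref{star}) at $n-k$. This has the same residue $u$, so its sum of $r^\star$ terms is the tail of the sum at $n$, as in (\ref{star-}). Subtracting (\ref{star-}) from (\ref{star}) gives
\[
r_n-r_{n-k}=r_{n-1}-r_{n-k-1}+r^\star_{n-k-1}.
\]
Substituting $r^\star_{n-k-1}=2r_{n-k-1}+1$ yields (\ref{rrrr1}), namely $r_n=r_{n-1}+r_{n-k}+r_{n-k-1}+1$.

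\textbf{Removing the constant.} Write (\ref{rrrr1}) at $n-1$ and subtract it from (\ref{rrrr1}) at $n$. The constants cancel, and rearranging gives
\[
r_n=2r_{n-1}-r_{n-2}+r_{n-k}-r_{n-k-2},
\]
which is the claimed recurrence.

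\textbf{Main obstacle.} The delicate step is making the reduction in the case $\varepsilon_{n-k}=0$ rigorous. The map sending $(\dots,0,-1,\dots,-1,1)$ to $(\dots,1,0,\dots,0)$ preserves the value, but Lemma \ref{crucial} was stated for genuine elements of $R^{(k)}_n$. I would handle this by proving a slightly more general form of the lemma: for any $\{-1,0,1\}$ prefix $(\varepsilon_{k-1},\dots,\varepsilon_{z-1})$ satisfying $\sum_{j<z}\varepsilon_jF^{(k)}_j=-F^{(k)}_z$ with $z\ge 2k-1$, the conclusions (1) and (2) still hold. The proof given for Lemma \ref{crucial} uses only this identity, so it carries over verbatim. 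This generalization also makes the correspondence used in the counting a bijection, and the same check confirms the base-level bookkeeping at $z=u$.
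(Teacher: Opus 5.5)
Your proposal is correct and follows the paper's proof step for step: you split on $\varepsilon_n$, descend via Lemma \ref{crucial} to obtain (\ref{star}), subtract (\ref{star-}) to reach (\ref{rrrr1}), and difference once more to remove the constant. Your generalized form of Lemma \ref{crucial} (for prefixes summing to $-F^{(k)}_z$) is just a rigorous version of the paper's ``identical in value'' reduction, and since the subtraction only uses the one-step relation between levels $n$ and $n-k$ of the descent, you need not verify that the chain literally ends in $r^\star_u$ (its indices are all $\equiv u-1 \pmod k$, so that last term is only a matter of convention).
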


Since $2r_n+1=2(2r_{n-1}+1)-(2r_{n-2}+1)+(2r_{n-k}+1)-(2r_{n-k-2}+1)$, as a consequence we also obtain
$$
r^\star_n=2r^\star_{n-1}-r^\star_{n-2}+r^\star_{n-k}-r^\star_{n-k-2},
$$
a recurrence similar to that of $r_n$. The characteristic polynomial of both sequences $(r_n)$ and $(r^\star_n)$ is
$$
c(x)=x^{k+2}-2x^{k+1}+x^k-x^2+1=(x-1)(x^{k+1}-x^k-x-1).
$$

The explicit formula for $r_n$ and $r^\star_n$ depends on the zeros of $c(x)$ and on the initial values. Of course, one zero is known which is $x_1=1$, the other ones cannot be determined in general.

In the next two subsections, we specialize Theorem \ref{final} to the cases $k=2,3$, and examine its consequences more closely.

\subsection{The Fibonacci sequence}

For $k=2$, let
\begin{equation*}
	R^{(2)\star}_n=\{(\varepsilon_1,\varepsilon_2,\dots,\varepsilon_n)\;|\;\varepsilon_1F_1+\varepsilon_2F_2+\cdots+\varepsilon_nF_n=0, \varepsilon_i\in\{-1,0,1\}\},
\end{equation*}
and
$	R^{(2)}_n\subset (R^{(2)\star}_n\setminus(0,0,\dots,0\
))$ defined accordingly. For $n = 1, \ldots, 4$, we find:
\begin{align*}
	&R^{(2)}_1=\emptyset,\quad R^{(2)}_2=\{(-1,1)\},\quad R^{(2)}_3=\{(-1,1,0),(-1,-1,1)\},\\
	&R^{(2)}_4=\{(-1,1,0,0),(-1,-1,1,0),(-1,0,-1,1),(0,-1,-1,1)\}.
\end{align*}

Hence (recalling that $r^\star_n=|R^{(2)\star}_n|$, $r_n=|R^{(2)}_n|$, and $r_n^\star=2r_n+1$) we see that
\begin{equation}\label{inistar}
r_1=0,\; r_2=1,\; r_3=2,\; r_4=4;\quad \text{and} \quad 	r^\star_1=1,\; r^\star_2=3,\; r^\star_3=5,\; r^\star_4=9.
\end{equation}

Noting that $k=2$ gives $-r_{n-2}+r_{n-k}=0$, Theorem \ref{final} now gives:
$$
r_n=2r_{n-1}-r_{n-4}.
$$

It follows that $r^\star_{n}=2r^\star_{n-1}-r^\star_{n-4}$ as well. However, this is not the minimal recursion for $r^\star_{n}$. The initial values (\ref{inistar}) and the factorization 

$$
x^4-2x^3+1=(x-1)(x^3-x^2-x-1)
$$

of the characteristic polynomial show that
$r^\star_n=r^\star_{n-1}+r^\star_{n-2}+r^\star_{n-3}$.

The previous formula together with the initial values from (\ref{inistar}) lead to the recursion
$$
r^\star_n=T_{n+2}+T_n\quad(n\ge1),
$$
where $T_n$ is the Tribonacci sequence. The authors found this discovery rather fascinating. The solution to a problem based on the Fibonacci sequence is given by the Tribonacci numbers. Remarkably, as we will see in the next section, this relationship goes in the reverse direction as well!\\

We thus end the subsection with the following corollary.

\begin{corollary}\label{Fibo}
	The sequences $(r_n)$ and $(r^\star_n)$ satisfy the relations
	\begin{eqnarray*}
		r_n&=&2r_{n-1}-r_{n-4}, \\
		r^\star_n&=&T_{n+2}+T_n.
	\end{eqnarray*}
\end{corollary}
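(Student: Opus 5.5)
The first relation is Theorem \ref{final} with $k=2$: the recurrence reads $r_n=2r_{n-1}-r_{n-2}+r_{n-2}-r_{n-4}$, and the middle terms cancel to give $r_n=2r_{n-1}-r_{n-4}$. We must check which $n$ this covers. The derivation of (\ref{rrrr1}) uses Lemma \ref{crucial} with $z\ge 2k-1=3$ and compares (\ref{star}) with (\ref{star-}). The safe plan is therefore to assert the recurrence for $n\ge 5$ and verify $n=5$ against the initial values (\ref{inistar}) together with a direct count of $r_5$. Since $r_1,\dots,r_4$ are known, it is cleanest to use (\ref{rrrr1}), $r_n=r_{n-1}+r_{n-2}+r_{n-3}+1$, to produce $r_5=4+2+1+1=8$, and then confirm $r_5=2\cdot4-0=8$.

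For the second relation, we pass to $r^\star_n=2r_n+1$. Rewriting (\ref{rrrr1}) with $r_j=(r^\star_j-1)/2$ gives
\[
r^\star_n-1=r^\star_{n-1}-1+r^\star_{n-2}-1+r^\star_{n-3}-1+2,
\]
that is, $r^\star_n=r^\star_{n-1}+r^\star_{n-2}+r^\star_{n-3}$. This matches the factorization $x^4-2x^3+1=(x-1)(x^3-x^2-x-1)$ and shows that the factor $x-1$ drops out. We apply it for every $n$ where (\ref{rrrr1}) holds, and check it directly on (\ref{inistar}): $9=5+3+1$.

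It then suffices to observe that $s_n:=T_{n+2}+T_n$ also satisfies the Tribonacci recurrence, since it is a sum of two shifted Tribonacci sequences. We then match three consecutive initial values: $s_1=T_3+T_1=1$, $s_2=T_4+T_2=2+1=3$, $s_3=T_5+T_3=4+1=5$. These agree with $r^\star_1,r^\star_2,r^\star_3$, and $s_4=T_6+T_4=7+2=9=r^\star_4$ as well. Induction on $n$ then yields $r^\star_n=T_{n+2}+T_n$ for all $n\ge1$.

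The main obstacle is the exact range of validity of (\ref{rrrr1}) for small $n$ when $k=2$, because the bookkeeping with the remainder $u$ in (\ref{star}) may break down at the start. To handle this, we would verify the Tribonacci relation for $r^\star_n$ directly for $n=4$ (already done) and possibly $n=5$, by counting $R^{(2)}_5$ by hand. For $n\ge5$ we rely on the general derivation of Theorem \ref{final}.
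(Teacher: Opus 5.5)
Your proof is correct. For the first relation you do exactly what the paper does: specialize Theorem \ref{final} to $k=2$ and cancel $-r_{n-2}+r_{n-k}$.

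For the second relation your route differs from the paper's.
\begin{itemize}
\item The paper passes to the homogeneous fourth-order recurrence $r^\star_n=2r^\star_{n-1}-r^\star_{n-4}$. It then appeals to the factorization $x^4-2x^3+1=(x-1)(x^3-x^2-x-1)$, together with the initial values, to discard the component belonging to the root $1$.
\item You instead substitute $r_j=(r^\star_j-1)/2$ directly into the inhomogeneous recurrence (\ref{rrrr1}), which for $k=2$ reads $r_n=r_{n-1}+r_{n-2}+r_{n-3}+1$. The constant $+1$ is absorbed exactly by the affine change $r^\star=2r+1$.
\end{itemize}
Your version is shorter, and it makes explicit what the paper leaves implicit. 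On the paper's route one must still observe that $d_n=r^\star_n-r^\star_{n-1}-r^\star_{n-2}-r^\star_{n-3}$ is constant by the order-$4$ recurrence, and that $d_4=9-5-3-1=0$. The paper's version has the modest advantage of using only the homogeneous statement of Theorem \ref{final}. Both arguments then finish identically, by matching $1,3,5$ with $T_{n+2}+T_n$.

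Two small remarks.

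First, your worry about the range of validity of (\ref{rrrr1}) is unnecessary. For $k=2$ and $n\ge 4$, Lemma \ref{crucial} with $z=n$ forces $\varepsilon_{n-1}=-1$ and $\varepsilon_{n-2}\in\{-1,0\}$. These two cases contribute $r^\star_{n-3}$ and $r_{n-2}-r_{n-3}$ solutions respectively. This gives (\ref{rrrr1}) for every $n\ge4$ directly, with no need for the remainder bookkeeping in (\ref{star}).

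Second, computing $r_5$ from (\ref{rrrr1}) and then ``confirming'' $r_5=2\cdot 4-0$ is circular, since the latter recurrence is derived from the former. An honest check is a hand count of $R^{(2)}_5$. That count gives $4$ vectors with $\varepsilon_5=0$ and $4$ with $\varepsilon_5=1$, so indeed $r_5=8$.
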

Remark: Note that sequence $(r_n)$ is a  shifted version of A008937 in \cite{OEIS}.

\subsection{The Tribonacci sequence}

For $k=3$, similar to the previous section, we let
\begin{equation*}
	R^{(3)\star}_n=\{(\varepsilon_2,\varepsilon_3,\dots,\varepsilon_n)\;|\;\varepsilon_2T_2+\varepsilon_3T_3+\cdots+\varepsilon_nT_n=0, \varepsilon_i\in\{-1,0,1\}\},
\end{equation*}
and $R^{(3)}_n\subset (S^\star_n\setminus(0,0,\dots,0\
))$ be defined accordingly. It is easy to check that 

\begin{equation}\label{inistar3}
	r_2=0,\quad r_3=1,\quad r_4=2,\quad r_5=3;\qquad r^\star_2=1,\quad r^\star_3=3,\quad r^\star_4=5,\quad r^\star_5=7.
\end{equation}

By Theorem \ref{final}, we get:

\begin{corollary}\label{Tribo}
	The sequence $(r_n)$ satisfies the relation
	\begin{eqnarray*}
		r_n&=&2r_{n-1}-r_{n-2}+r_{n-3}-r_{n-5}. 
	\end{eqnarray*}
\end{corollary}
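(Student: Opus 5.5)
Corollary~\ref{Tribo} is the specialization of Theorem~\ref{final} to $k=3$, so the plan is to substitute $k=3$ into $r_n=2r_{n-1}-r_{n-2}+r_{n-k}-r_{n-k-2}$. This gives $r_n=2r_{n-1}-r_{n-2}+r_{n-3}-r_{n-5}$ at once. Unlike the case $k=2$, the terms $-r_{n-2}$ and $r_{n-k}$ do not cancel here, so no simplification occurs.

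The only real work is checking the range of $n$ on which Theorem~\ref{final} is valid, and making sure that all indices that appear stay within the domain $n\ge k-1=2$ of $(r_n)$. The derivation of (\ref{star}) relies on Lemma~\ref{crucial}, which requires $z\ge 2k-1=5$. Then (\ref{rrrr1}) is obtained by subtracting (\ref{star-}), and one further difference is taken to remove the constant. Tracing these steps, the recurrence should hold once $n-5\ge 2$, that is, for $n\ge 7$, with $r_2,\dots,r_6$ as initial data. The corollary should be read in that range.

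As a sanity check, I will compute $r_6$ directly by enumerating the zero-sum $\{-1,0,1\}$-combinations of $T_2,\dots,T_6=1,1,2,4,7$, and then compare $r_7$ from the recurrence with an independent count. I will also confirm (\ref{rrrr1}) on the listed initial values: for example, $r_5=r_4+r_2+r_1+1$ fails to make sense because $r_1$ is undefined, which confirms that the small cases must be handled as initial values rather than forced through the recurrence. The main obstacle is precisely this bookkeeping of validity ranges; the algebraic content is immediate from Theorem~\ref{final}.
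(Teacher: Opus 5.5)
Your proposal is correct and takes the same approach as the paper, which obtains the corollary simply by substituting $k=3$ into Theorem~\ref{final}. Your validity range is also right: all indices are defined and the recurrence holds for $n\ge 7$, with $r_2,\dots,r_6$ as initial data (for example, $r_7=2\cdot 5-3+2-0=9$), a restriction the paper leaves implicit.
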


Note that the characteristic polynomial of this recurrence can be factorized as
$$
x^5-2x^4+x^3-x^2+1=(x-1)(x^2+1)(x^2-x-1),
$$
which foreshadows the claimed connection with the Fibonacci sequence. Indeed, using the zeros $1,i,-i,\alpha=(1+\sqrt{5})/2,\beta=(1-\sqrt{5})/2$ of the characteristic polynomial and the first four initial values from (\ref{inistar3}), we obtain the explicit formula

$$
r_n=-\frac{1}{2}-\frac{3+i}{20}i^{n-2}+\frac{-3+i}{20}(-i)^{n-2}+\frac{2+\sqrt{5}}{5}\alpha^{n-2}+\frac{2-\sqrt{5}}{5}\beta^{n-2}.
$$

Since $2+\sqrt{5}=\alpha^3$, $2-\sqrt{5}=\beta^3$, and noting that the first three terms in the above sum take the values $\delta_n=-1/5,-3/5,-4/5,-2/5$ if $n$ is congruent to $0,1,2,3$, respectively, the previous formula can be further simplified to get:

$$
r_n=\frac{L_{n+1}}{5}+\delta_n=\frac{F_{n+2}+F_{n}}{5}+\delta_n,
$$

where $(L_n)$ is the sequence of Lucas numbers with explicit formula $L_n=\alpha^n+\beta^n$. We highlight the interesting observation that a relationship uncovered in the previous section now seems to be going the other way: therein we had showed that the number of Fibonacci representations was related to Tribonacci numbers, and have now showed that the number of Tribonacci representations is also linked to Fibonacci numbers.\\

The first few terms of $(r_n)$ (for $n\ge2$) are
$$
0,1,2,3,5,9,15,24,39,64,104,168,272,441,714,1155,\dots
$$	
which is seemingly the sequence A097083 in \cite{OEIS}. On an ``algebraic level", this connection is perhaps not so surprising since the characteristic polynomial $(x-1)(x^2 + 1)(x^2 -x -1)$ of $r_n$ is the denominator of the summands of the generating function of A097083. However, connections on the combinatorial level remain speculative currently.

\vspace{0.3cm}

\section{Random inhomogeneous Tribonacci recurrence}

In this section, we follow the same strategy we previously employed for the Fibonacci sequence case in \cite{LSz}. The proof of Theorem \ref{t1} will be given in detail, while immediate corollaries will just be stated. When necessary, we will also occasionally highlight certain differences and similarities to the paper \cite{LSz}.

\subsection{Entries of row $n$}

This subsection is devoted to investigating the features of the binary tree illustrated by Figure \ref{fig1}. We will start by determining the general entry of the tree $({\mathcal T}_{n,k})$. To prevent potential clash of notation, when the binary representation of a number is being invoked, it will be indicated/denoted by an upper index $ ^{[2]}$.

\begin{theorem}\label{t1} (analogous to Theorem 1 of \cite{LSz})
	Let $n\ge3$ and $0\le k\le 2^{n-2}-1$. Assume that the binary representation of $k$ is $k=\varepsilon_{n-3}\varepsilon_{n-4}\dots\varepsilon_{1}\varepsilon_{0}^{[2]}
	$ ($\varepsilon_i\in\{0,1\}$). The entry ${\mathcal T}_{n,k}$ of the $k$th element of row $n$ is given by
	\begin{equation}\label{e1}
		{\mathcal T}_{n,k}=m_a(n)+\left(b-a\right)\sum_{j=0}^{n-3}\varepsilon_jT_{j+2}.
	\end{equation}	
\end{theorem}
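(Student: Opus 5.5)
Each entry of row $n$ is the value $G_n$ obtained from one realization of the forcing sequence $w_0,w_1,\dots,w_{n-3}\in\{a,b\}$. The index $k$ encodes this choice through its binary digits: I read $\varepsilon_j=1$ as $w_j=b$ and $\varepsilon_j=0$ as $w_j=a$. The ordering convention of the tree in Figure \ref{fig1} should match this, so that $k=0$ is the all-$a$ path; if it does not, one simply relabels. The plan is to apply Theorem \ref{Sopron} directly with a non-constant $w$, rather than Corollary \ref{C2}, which assumes $w$ is constant.

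\textbf{Step 1: the general formula for arbitrary $w$.} Take $\ell=3$, $A_1=A_2=A_3=1$, $f_n=T_n$ and $G_0=G_1=0$, $G_2=1$. Theorem \ref{Sopron}, with the same simplifications that produce Corollary \ref{C2}, should give
\[
G_n=\sum_{j=n-3}^{n-1}T_j+\sum_{j=0}^{n-3}T_{n-1-j}\,w_j .
\]
The homogeneous part is $T_n$. The exact index shift in the convolution (whether $T_{n-1-j}$ or something off by one) has to be pinned down carefully. I would check it directly by induction on $n$ from \eqref{rule}: the last forcing term $w_{n-3}$ enters $G_n$ with coefficient $1=T_2$, and each older $w_j$ is propagated by the Tribonacci recursion. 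Alternatively, one can verify it from the linearity of \eqref{rule} in $(w_j)$, since the contribution of a single impulse $w_j$ is the Tribonacci sequence started at step $j+3$, namely $T_{n-1-j}$.

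\textbf{Step 2: split into base value and increments.} Write $w_j=a+(b-a)\varepsilon_j'$, where $\varepsilon_j'\in\{0,1\}$. The $a$-part gives
\[
T_n+a\sum_{i=2}^{n-1}T_i=T_n+a\,\frac{T_{n+1}+T_{n-1}-1}{2}=m_n(a)
\]
by Corollary \ref{C1}, since $T_0=T_1=0$; this is the paper's $m_a(n)$. The remaining part is $(b-a)\sum_{j}\varepsilon'_jT_{n-1-j}$.

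\textbf{Step 3: reindex to match the binary digits of $k$.} Coefficient $T_{n-1-j}$ multiplies the choice made at step $j$. Setting $i=n-3-j$ turns this into $T_{i+2}$. So the claim follows once one identifies $\varepsilon_i$, the $i$th binary digit of $k$, with $\varepsilon'_{n-3-i}$. Under this identification the earliest choice $w_0$ is the most significant bit, carrying weight $T_{n-1}$, and the latest choice $w_{n-3}$ is the least significant bit, carrying weight $T_2=1$. This is exactly the natural left-to-right indexing of the leaves of a binary tree, in which the first branching decides the top half of the row.

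The main obstacle is bookkeeping rather than mathematics. One difficulty is fixing the precise convolution indices in Step 1. The other is confirming that the tree's left/right convention ($a$ to the left, $b$ to the right, with earlier levels more significant) matches the digit ordering in \eqref{e1}. I would settle both by checking rows $n=3,4$ explicitly against Figure \ref{fig1}. For $n=4$ these entries are $G_4=2+2a$, $2+a+b$, $2+a+b$, $2+2b$; they agree with \eqref{e1}, where $m_4(a)=2+2a$ and the weights are $T_2=1$ and $T_3=1$. I would then complete the argument by induction on $n$ using \eqref{rule}: passing from row $n$ to row $n+1$ appends a new least significant bit and shifts all existing weights $T_{i+2}\mapsto T_{i+3}$, which is consistent with the recursion.
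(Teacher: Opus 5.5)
Your proposal is correct, but it follows a different route from the paper.

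\textbf{The paper's route.} The paper proves the formula by strong induction on $n$. It checks rows $3,4,5$ by hand. It then applies the three-ancestor rule $\mathcal{T}_{n,k}=\mathcal{T}_{n-1,\lfloor k/2\rfloor}+\mathcal{T}_{n-2,\lfloor k/4\rfloor}+\mathcal{T}_{n-3,\lfloor k/8\rfloor}+\bigl((b-a)\varepsilon_0+a\bigr)$ and substitutes the induction hypothesis for the three ancestors. Finally it merges the three shifted digit sums via the Tribonacci recurrence, with separate bookkeeping for the low digits $\varepsilon_0,\varepsilon_1,\varepsilon_2$.

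\textbf{Your route.} You regard each entry as $G_n$ for the forcing word $w_0,\dots,w_{n-3}$ read off from the digits of $k$, and you get a closed form by superposition. Your digit convention agrees with the paper's rule, in which $\varepsilon_0$ selects the newest forcing term. Your Step~1 index is also right. Theorem~\ref{Sopron} with $\ell=3$, $f=T$, $G_0=G_1=0$, $G_2=1$ collapses to $G_{n+1}=T_{n+1}+\sum_{j=0}^{n}T_{n-j}w_j$. The terms $j=n-1,n$ vanish since $T_0=T_1=0$, which gives your formula $G_n=T_n+\sum_{j=0}^{n-3}T_{n-1-j}w_j$ after $n+1\mapsto n$. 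Your impulse-response argument proves the same formula independently.

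\textbf{What each approach buys.} Your route needs no base cases and no low-digit bookkeeping. It explains why the weights are Tribonacci numbers: they are the impulse response. It yields Corollary~\ref{cn1} at once and extends verbatim to the $k$-generalized case. It also uses Theorem~\ref{Sopron} in the non-constant generality the paper states but never exploits. The paper's route stays in the tree's local ancestor language, which it reuses for Theorem~\ref{tn2} and the half-row corollaries.

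\textbf{One correction.} The induction in your last paragraph is superfluous once Step~1 is in place. As phrased, a one-step shift of weights is not a valid induction step for a third-order recurrence. A genuine induction needs the three-ancestor rule, which is exactly the paper's proof.
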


\begin{proof}

	First let $n=3$. Then for $k=0=0^{[2]}$ we have ${\mathcal T}_{3,0}=T_3+a(T_4+T_2-1)/2+(b-a)\cdot0\cdot T_2=1+a$, while $k=1=1^{[2]}$ yields ${\mathcal T}_{3,1}=T_3+a(T_4+T_2-1)/2+(b-a)\cdot1\cdot T_2=1+b$. Now let $n=4$. We have four cases $k=0,1,2,3$. For example, when $k=2=10^{[2]}$, we have that
	
	$$
	{\mathcal T}_{4,2}=T_4+a\frac{T_5+T_3-1}{2}+(b-a)(0\cdot T_2+1\cdot T_3)=2+a+b.
	$$
	
The other three entries of row 3 are obtained similarly: we have ${\mathcal T}_{4,0}=2+2a$, ${\mathcal T}_{4,1}=2+a+b$, and ${\mathcal T}_{4,3}=2+2b$. One can similarly check the values ${\mathcal T}_{5,0}=4+4a$, ${\mathcal T}_{5,1}={\mathcal T}_{5,2}=4+3a+b$, ${\mathcal T}_{5,3}={\mathcal T}_{5,4}=4+2a+2b$, ${\mathcal T}_{5,5}={\mathcal T}_{5,6}=4+a+3b$, and ${\mathcal T}_{5,7}=4+4b$ (see  Figure \ref{fig1}).
	
Assume that the statement is true for $3,4,\dots,n-1$ $(n\ge6)$. Put $k_1=\lfloor k/2\rfloor$, $k_2=\lfloor k_1/2\rfloor$, and $k_3=\lfloor k_2/2\rfloor$. The formula
	
\begin{equation*}\label{derivation}
		{\mathcal T}_{n,k}={\mathcal T}_{n-1,k_1}+{\mathcal T}_{n-2,k_2}+{\mathcal T}_{n-3,k_3}+\left((b-a)\varepsilon_0+a\right)	
\end{equation*}
	
gives the derivation rule of the tree. (Clearly, $(b-a)\varepsilon_0+a$ is equal to $a$ if  $\varepsilon_0=0$, and $b$ otherwise.) Applying this, together with the induction hypothesis and the definition of the Tribonacci sequence yields

	\begin{eqnarray*}
		{\mathcal T}_{n,k}&=&T_{n-1}+a\frac{T_{n}+T_{n-2}-1}{2}+(b-a)\sum_{j=1}^{n-3}\varepsilon_jT_{j+1}\\
		&&+T_{n-2}+a\frac{T_{n-1}+T_{n-3}-1}{2}+(b-a)\sum_{j=2}^{n-3}\varepsilon_jT_{j}\\
		&&+T_{n-3}+a\frac{T_{n-2}+T_{n-4}-1}{2}+(b-a)\sum_{j=3}^{n-3}\varepsilon_jT_{j-1}+(b-a)\varepsilon_0+a\\
		&=&T_{n}+a\frac{T_{n+1}+T_{n-1}-1}{2}-a+(b-a)\sum_{j=3}^{n-3}\varepsilon_jT_{j+2}\\ &&+(b-a)(\varepsilon_1T_2+\varepsilon_2T_3)+(b-a)\varepsilon_2T_2+((b-a)\varepsilon_0+a)\\
		&=&T_{n}+a\frac{T_{n+1}+T_{n-1}-1}{2}+(b-a)\sum_{j=0}^{n-3}\varepsilon_jT_{j+2}.
	\end{eqnarray*}

One can easily check that $(b-a)(\varepsilon_1T_2+\varepsilon_2T_3+\varepsilon_2T_2+\varepsilon_0=(b-a)(\varepsilon_0T_2+\varepsilon_1T_3+\varepsilon_2T_4)$ holds above, and this completes the proof.
\end{proof}	

Below we give two corollaries that parallel analogous results previously established for the Fibonacci case in \cite{LSz}. The same arguments therein essentially translate to our current case.

\begin{corollary}\label{cn1} (analogous to Corollary 1 in \cite{LSz}, and Corollary 1 of  \cite{LSz0})

Let $a_0=a$, $a_1=b$. Then the formula (\ref{e1}) is equivalent to
	$${\mathcal T}_{n,k}=T_n+\sum_{j=0}^{n-3}a_{\varepsilon_j}T_{j+2}.$$
\end{corollary}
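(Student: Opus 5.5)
We prove Corollary \ref{cn1} by rewriting the constant part $m_a(n)=T_n+a\frac{T_{n+1}+T_{n-1}-1}{2}$ of (\ref{e1}) as a weighted sum of Tribonacci numbers. The corollary then reduces to a regrouping of (\ref{e1}).

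First we apply Corollary \ref{C1} with $k=3$ at index $n-1$:
$$\sum_{j=0}^{n-1}T_j=\frac{T_{n+1}+T_{n-1}-1}{2}.$$
Since $T_0=T_1=0$, the left-hand side equals $\sum_{j=2}^{n-1}T_j=\sum_{j=0}^{n-3}T_{j+2}$. Hence
$$m_a(n)=T_n+a\sum_{j=0}^{n-3}T_{j+2}.$$
One could alternatively read this off directly from Corollary \ref{C2} with $\ell=3$ and $w=a$: there $\sum_{j=n-3}^{n-1}T_j=T_n$, and the second term is $a\sum_{j=0}^{n-1}T_j$.

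Next we substitute into (\ref{e1}) and combine the two sums over the common range $0\le j\le n-3$:
$${\mathcal T}_{n,k}=T_n+\sum_{j=0}^{n-3}\bigl(a+(b-a)\varepsilon_j\bigr)T_{j+2}.$$
For $\varepsilon_j\in\{0,1\}$ the coefficient $a+(b-a)\varepsilon_j$ equals $a=a_0$ when $\varepsilon_j=0$ and $b=a_1$ when $\varepsilon_j=1$, so it is exactly $a_{\varepsilon_j}$. This gives the claimed formula.

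The reverse implication follows by reading the same chain of equalities backwards, which establishes the stated equivalence.

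The only point needing care is the index bookkeeping in the first step: the sum runs up to $n-1$, not $n$, and the vanishing of $T_0,T_1$ is what lets the range shift to $j+2$. Everything else is direct substitution.
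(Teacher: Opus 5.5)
Your proposal is correct and takes essentially the approach the paper intends. The paper states this corollary without proof as an immediate consequence of Theorem \ref{t1}. The only real content is your identity $m_a(n)=T_n+a\sum_{j=0}^{n-3}T_{j+2}$, obtained from Corollary \ref{C1} (or Corollary \ref{C2}) using $T_0=T_1=0$, followed by merging the two sums into coefficients $a+(b-a)\varepsilon_j=a_{\varepsilon_j}$.
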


\begin{corollary}\label{cn2} (analogous to Corollary 2 in \cite{LSz})	

	Taking the $n^{th}$ row of the tree (for $n \geq 3$), as $k$ goes through the range $0,1,\dots,2^{n-2}-1$, the formula (\ref{e1}) gives a map onto the set  
	
	$$\left\{m_a(n),\,m_a(n)+(b-a),\,\dots,\,m_a(n)+j(b-a),\,\dots,\, m_b(n)\right\}.$$
	
The cardinality of the number of distinct elements in the $n^{th}$ row of the tree is 

	$$T_n^\star=\frac{T_{n+1}+T_{n-1}+1}{2}.$$
\end{corollary}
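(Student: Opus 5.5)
By Theorem \ref{t1}, as $k$ runs over $0,1,\dots,2^{n-2}-1$, the binary digits $(\varepsilon_0,\dots,\varepsilon_{n-3})$ run over all of $\{0,1\}^{n-2}$. Hence the $n$th row consists exactly of the numbers $m_a(n)+(b-a)S$, where $S=\sum_{j=0}^{n-3}\varepsilon_jT_{j+2}$ ranges over all subset sums of $\{T_2,T_3,\dots,T_{n-1}\}$. Since $b-a\ne0$, distinct values of $S$ give distinct entries. So both claims of the corollary follow once we show that the set of subset sums is exactly the full interval of integers $\{0,1,\dots,M_n\}$ with $M_n=\sum_{j=2}^{n-1}T_j$.

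For the upper end, Corollary \ref{C1} (with $T_0=T_1=0$) gives
$M_n=\sum_{j=0}^{n-1}T_j=\frac{T_{n+1}+T_{n-1}-1}{2}$. Therefore $m_a(n)+M_n(b-a)=T_n+b\frac{T_{n+1}+T_{n-1}-1}{2}=m_b(n)$, which identifies the largest element of the target set. The number of integers in $\{0,\dots,M_n\}$ is $M_n+1=\frac{T_{n+1}+T_{n-1}+1}{2}=T_n^\star$, as claimed.

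The main step is to show that there are no gaps: every integer $0\le N\le M_n$ is a subset sum of $T_2,\dots,T_{n-1}$. I will prove this by induction on $n$, using the standard completeness criterion. If a multiset of positive integers $c_1\le c_2\le\cdots$ satisfies $c_1=1$ and $c_{i+1}\le 1+\sum_{l\le i}c_l$ for all $i$, then its subset sums fill the interval from $0$ to the total. The induction step is immediate: if the sums of the first $i$ terms cover $[0,s_i]$ and $c_{i+1}\le s_i+1$, then adding $c_{i+1}$ to those sums covers $[c_{i+1},c_{i+1}+s_i]$, and the two intervals overlap or abut, so together they cover $[0,s_{i+1}]$. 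In our case $c_1=T_2=1$, so it remains to check
\[
T_{m}\le 1+\sum_{j=2}^{m-1}T_j \qquad (m\ge3).
\]
By Corollary \ref{C1} this reads $T_m\le \frac{T_{m}+T_{m-2}+1}{2}$, that is, $T_m\le T_{m-2}+1$. This is \emph{false} for large $m$ (for instance $T_6=7>T_4+1=3$). So the obstacle is genuine: the criterion fails, and the gap-free claim must be rechecked directly.

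Small cases already show gaps. For $n=6$ the available terms are $1,1,2,4$, which give all sums $0,\dots,8$, so there is no gap. For $n=7$ the terms are $1,1,2,4,7$, with total $15$; the sums of $1,1,2,4$ cover $[0,8]$ and $7$ shifts this to $[7,15]$, so again there is no gap. The right criterion is therefore $T_m\le 1+\sum_{j=2}^{m-1}T_j$, which I mis-simplified above. By Corollary \ref{C1}, $\sum_{j=2}^{m-1}T_j=\frac{T_{m+1}+T_{m-1}-1}{2}$, so the inequality is equivalent to $2T_m\le T_{m+1}+T_{m-1}+1$. Using $T_{m+1}=T_m+T_{m-1}+T_{m-2}$, this becomes $T_m\le 2T_{m-1}+T_{m-2}+1$, which holds by Lemma \ref{l1} (since $T_m\le 2T_{m-1}$, for $m-1\ge2$). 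With this inequality the criterion applies, the subset sums fill $\{0,\dots,M_n\}$, and so the row values form exactly the arithmetic progression from $m_a(n)$ to $m_b(n)$ with step $b-a$. This proves surjectivity onto the stated set and the count $T_n^\star$.
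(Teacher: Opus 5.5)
Your final argument is correct and matches what the paper intends; the paper gives no separate proof and defers to the Fibonacci case in \cite{LSz}. By (\ref{e1}) the row values are $m_a(n)+(b-a)S$ with $S$ running over subset sums of $T_2,\dots,T_{n-1}$, the interval-covering induction shows these sums are exactly $\{0,1,\dots,\frac{T_{n+1}+T_{n-1}-1}{2}\}$, and the largest sum yields $m_b(n)$ and the count $T_n^\star$.

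In a clean write-up, delete the mis-indexed bound $\frac{T_m+T_{m-2}+1}{2}$ and the false remarks that the criterion fails and that small cases show gaps. The needed inequality is immediate from $T_m=T_{m-1}+T_{m-2}+T_{m-3}\le\sum_{j=2}^{m-1}T_j$ (using $T_0=T_1=0$), so Lemma \ref{l1} is not needed.
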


The next three statements describe the relation between the first and second half of a row, the first part of a row and the previous row, and finally the second half of a row and the previous row, respectively (for illustration see Figure \ref{fig8}).
\vspace{0.3cm}

\begin{figure}[h]
	\centering
	\includegraphics[scale=0.5]{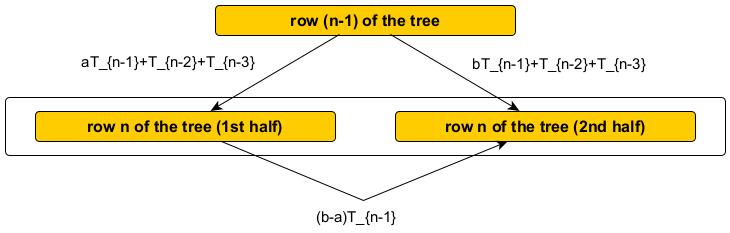} 
	\caption{Relation among row $n-1$ and half parts of row $n$.}
	\label{fig8}
\end{figure}

\begin{corollary}\label{cn3} (analogous to Corollary 3 in \cite{LSz})
	If $n\ge 3$ and $0\le k\le 2^{n-3}-1$, $K=k+2^{n-3}$, then
	$$
	{\mathcal T}_{n,K}={\mathcal T}_{n,k}+(b-a)T_{n-1}.
	$$	
\end{corollary}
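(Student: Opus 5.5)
The plan is to derive this directly from the closed formula (\ref{e1}) of Theorem \ref{t1}, without any new induction. The starting point is the binary expansion of $K=k+2^{n-3}$. Since $0\le k\le 2^{n-3}-1$, the index $k$ needs at most $n-3$ binary digits, so $k=\varepsilon_{n-4}\dots\varepsilon_0^{[2]}$ with leading digit $\varepsilon_{n-3}=0$. Adding $2^{n-3}$ therefore sets exactly that top digit to $1$ and leaves every lower digit unchanged; no carries occur. As a result, the $(n-2)$-digit expansions of $k$ and $K$ agree in positions $0,\dots,n-4$ and differ only in position $n-3$, where $k$ has $0$ and $K$ has $1$. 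Since $K\le 2^{n-2}-1$, the index $K$ is admissible in Theorem \ref{t1}.

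Next I would substitute both indices into (\ref{e1}). The term $m_a(n)$ is the same for both, and so are all summands with $j\le n-4$. Subtracting the two expressions, the only surviving term is
$$
{\mathcal T}_{n,K}-{\mathcal T}_{n,k}=(b-a)(1-0)T_{(n-3)+2}=(b-a)T_{n-1},
$$
which is exactly the claimed identity.

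No step here is a real obstacle; the only point needing care is the digit and indexing bookkeeping. Specifically, the digit $\varepsilon_j$ is weighted by $T_{j+2}$, so the top digit $j=n-3$ carries weight $T_{n-1}$ and not $T_n$ or $T_{n-2}$. The argument also covers the boundary case $n=3$: there $k=0$, $K=1$, and the identity reads $(1+b)-(1+a)=(b-a)T_2$, which agrees with the values computed at the start of the proof of Theorem \ref{t1}.
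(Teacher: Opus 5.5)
Your proof is correct and matches the paper's approach. The paper gives no separate proof and treats the corollary as an immediate consequence of the closed formula (\ref{e1}) in Theorem \ref{t1}; your observation that $k$ and $K=k+2^{n-3}$ differ only in the binary digit $\varepsilon_{n-3}$, which carries weight $T_{n-1}$, is exactly that consequence.
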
	

\begin{theorem}\label{tn2} (analogous to Theorem 2 in \cite{LSz}, and partially to  Corollary 1 of \cite{LSz0})
	If $n\ge 3$ and $0\le k\le 2^{n-3}-1$, then 
	\begin{equation*}
		{\mathcal T}_{n,k}={\mathcal T}_{n-1,k}	+(aT_{n-1}+T_{n-2}+T_{n-3}).
	\end{equation*}
\end{theorem}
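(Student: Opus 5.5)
The plan is to read the identity straight off the closed form (\ref{e1}) of Theorem \ref{t1}, applied to both row $n$ and row $n-1$. The condition $0\le k\le 2^{n-3}-1$ means that in the $(n-2)$-digit binary expansion $k=\varepsilon_{n-3}\varepsilon_{n-4}\dots\varepsilon_0^{[2]}$ used for row $n$ we have $\varepsilon_{n-3}=0$. The remaining digits $\varepsilon_{n-4}\dots\varepsilon_0$ are exactly the $(n-3)$-digit expansion of the same $k$ used for row $n-1$ (where $0\le k\le 2^{n-3}-1$ is the admissible range). Hence the two sums $(b-a)\sum_j \varepsilon_j T_{j+2}$ coincide: the extra term $j=n-3$ in row $n$ vanishes. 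Subtracting the two instances of (\ref{e1}) therefore gives
$$
{\mathcal T}_{n,k}-{\mathcal T}_{n-1,k}=m_a(n)-m_a(n-1),
$$
with $m_a(n)=T_n+a\frac{T_{n+1}+T_{n-1}-1}{2}$ as introduced after Corollary \ref{C2}.

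It remains to simplify this difference using only the Tribonacci recurrence. For the non-$a$ part, $T_n-T_{n-1}=T_{n-2}+T_{n-3}$. For the coefficient of $a$ we get
$$
\tfrac12\bigl(T_{n+1}-T_n+T_{n-1}-T_{n-2}\bigr)=\tfrac12\bigl(T_{n-1}+T_{n-2}+T_{n-1}-T_{n-2}\bigr)=T_{n-1},
$$
using $T_{n+1}-T_n=T_{n-1}+T_{n-2}$. Together these give the increment $aT_{n-1}+T_{n-2}+T_{n-3}$, as claimed.

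The only delicate point is the range of validity. Theorem \ref{t1} is stated for $n\ge3$, so the argument above covers $n\ge4$, where row $n-1$ has index at least $3$. The case $n=3$ forces $k=0$, and row $2$ consists of the single entry $G_2=1$. There we check directly that ${\mathcal T}_{3,0}=1+a=1+aT_2+T_1+T_0$. I expect no real obstacle beyond this bookkeeping of the binary digits and the base case.
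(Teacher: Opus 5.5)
Your proof is correct. For $0\le k\le 2^{n-3}-1$ the digit $\varepsilon_{n-3}$ vanishes, so the two instances of (\ref{e1}) differ only by $m_a(n)-m_a(n-1)=aT_{n-1}+T_{n-2}+T_{n-3}$, and your direct check at $n=3$ against the single entry $G_2=1$ of row $2$ covers the remaining case. The paper states this result without proof, as an immediate consequence of Theorem \ref{t1} carried over from the Fibonacci case, and subtracting the two closed forms is exactly that intended derivation.
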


\begin{corollary}\label{cn4} (analogous to Corollary 4 in \cite{LSz}, and partially to  Corollary 1 of \cite{LSz0})
	If $n\ge 3$ and $0\le k\le 2^{n-3}-1$, $K=k+2^{n-3}$, then
	$$
	{\mathcal T}_{n,K}={\mathcal T}_{n-1,k}+(bT_{n-1}+T_{n-2}+T_{n-3}).
	$$	
\end{corollary}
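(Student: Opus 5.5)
The statement is Corollary \ref{cn4}. I would derive it directly from Theorem \ref{t1}, combining Corollary \ref{cn3} and Theorem \ref{tn2}.

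\textbf{First route (combining earlier results).} Corollary \ref{cn3} gives ${\mathcal T}_{n,K}={\mathcal T}_{n,k}+(b-a)T_{n-1}$. Theorem \ref{tn2} gives ${\mathcal T}_{n,k}={\mathcal T}_{n-1,k}+(aT_{n-1}+T_{n-2}+T_{n-3})$. Adding these two identities, the coefficients of $T_{n-1}$ combine to $(b-a)+a=b$. This yields
\[
{\mathcal T}_{n,K}={\mathcal T}_{n-1,k}+(bT_{n-1}+T_{n-2}+T_{n-3}),
\]
which is the claim. Both results are assumed available from the paper and hold under the same hypotheses $n\ge3$, $0\le k\le 2^{n-3}-1$, so this is essentially a one-line argument.

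\textbf{Second route (direct check from Theorem \ref{t1}).} As a sanity check I would also verify the statement straight from formula (\ref{e1}), in the form of Corollary \ref{cn1}.

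\begin{itemize}
\item Since $0\le k<2^{n-3}$, the binary digits of $K=k+2^{n-3}$ are those of $k$ with an extra leading digit $\varepsilon_{n-3}=1$.
\item Corollary \ref{cn1} gives ${\mathcal T}_{n,K}=T_n+\sum_{j=0}^{n-4}a_{\varepsilon_j}T_{j+2}+bT_{n-1}$.
\item In row $n-1$ the number $k$ has exactly $n-3$ digits $\varepsilon_{n-4}\dots\varepsilon_0$, so ${\mathcal T}_{n-1,k}=T_{n-1}+\sum_{j=0}^{n-4}a_{\varepsilon_j}T_{j+2}$.
\item Subtracting, the difference is $T_n-T_{n-1}+bT_{n-1}$, and the Tribonacci recurrence turns this into $T_{n-2}+T_{n-3}+bT_{n-1}$, as required.
\end{itemize}

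\textbf{Main obstacle.} The only delicate point is bookkeeping: making sure that the leading digit $\varepsilon_{n-3}$ of $K$ carries weight $T_{(n-3)+2}=T_{n-1}$ in (\ref{e1}). I would also check the edge case $n=3$, where the sum $\sum_{j=0}^{n-4}$ is empty and row $2$ consists of the single entry $1=T_2$. There the claim reads $1+b=1+(bT_2+T_1+T_0)$, which agrees with the tree value ${\mathcal T}_{3,1}=1+b$ computed in the proof of Theorem \ref{t1}.
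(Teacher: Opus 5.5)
Your proposal is correct and takes essentially the paper's approach: the paper states this corollary without a written proof, as an immediate consequence of (\ref{e1}), and adding Corollary~\ref{cn3} to Theorem~\ref{tn2} (so that $(b-a)T_{n-1}+aT_{n-1}=bT_{n-1}$) is exactly that one-line derivation. Your direct check from (\ref{e1}) is also correct, including the separate treatment of $n=3$, where row $2$ is the single entry $T_2=1$.
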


\subsection{Graph transformation of $({\mathcal T}_{n,k})$}
The purpose of this subsection is to study a new graph $({\mathcal F})$ which can be obtained from the tree by fusing the vertices having identical value in a row. These fusions inevitably result in vertices with multiplicities, which we interpret as frequencies. The method is verbatim that which was detailed in \cite{LSz}. The reader interested in the exact techniques is encouraged to consult the paper. Meanwhile, we exhibit the graph transformation via the below figure.

\begin{figure}[h]
	\centering
	\includegraphics[scale=0.4]{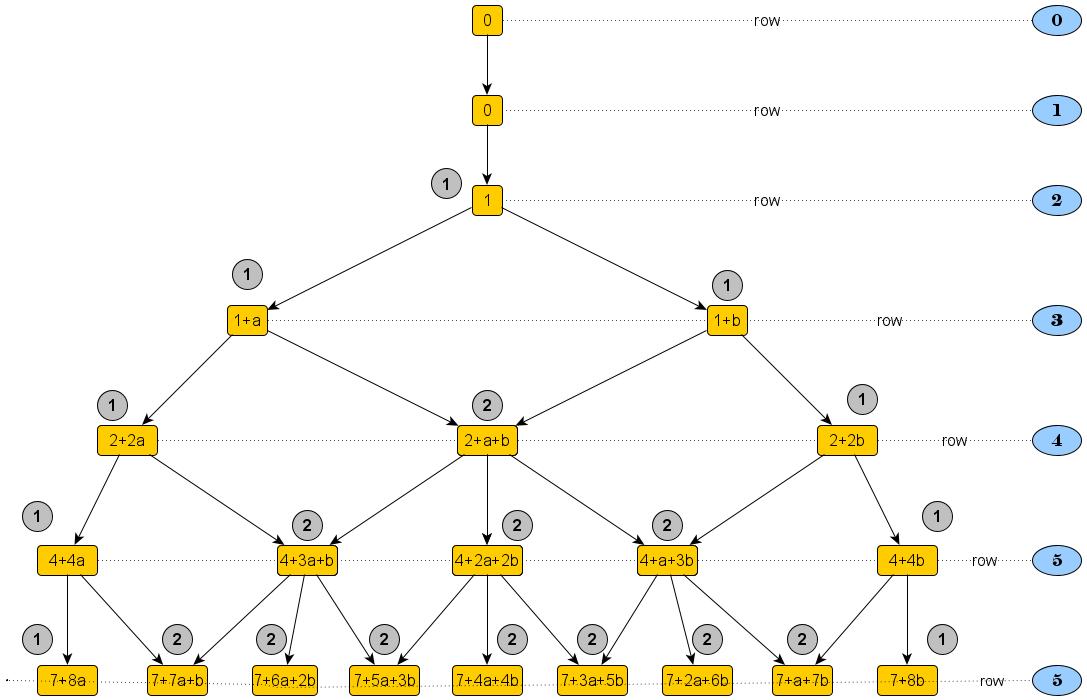} 
	\caption{The first few rows of graph $({\mathcal F})$.}
	\label{fig2}
\end{figure}

After the process of graph transformation the vertices of the new graph have frequencies. Figure \ref{fig2} shows the frequencies of the vertices in gray circles. For example, the vertex $7+7a+b$ in row 5 has frequency 2, which means that the value $7+7a+b$ appears 2 times in row 5 of the tree (cf.~Figure \ref{fig1}).

Assume that the table
\begin{equation*}\label{gameta}
	\Gamma_n=\; \left\{ \begin{array}{cccc}
		v_0 & v_1 & \dots & v_{T^\star_{n}-1} \\
		\gamma_0 & \gamma_1 & \dots & \gamma_{T^\star_{n}-1} \\
	\end{array} \right\}
\end{equation*}
gives the values $v_i$ of row $n$ of $({\mathcal F})$ with frequencies $\gamma_i$, respectively. Here $v_i=T_n+a(T^\star_n-1)+(b-a)i$, $i=0,1,\dots,T^\star_n-1$.
(See Corollary \ref{cn2}.) The polynomial $$Q_n(x)=\gamma_0+\gamma_1x+\cdots+\gamma_{T^\star_{n}-1}x^{T^\star_{n}-1}$$ helps keep track of the coefficients. For instance, $Q_2(x)=1$, $Q_3(x)=1+x$, $Q_4(x)=1+2x+x^2$. We can obtain the following theorem.

\begin{theorem}\label{tn333} (analogous to Theorem 3 in \cite{LSz})
	The polynomials  $Q_{n-1}(x)$ and $Q_n(x)$ satisfy the equality
	\begin{equation*}
		Q_{n+1}(x)=Q_{n}(x)(1+x^{T_{n}}).
	\end{equation*}
\end{theorem}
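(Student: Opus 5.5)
The plan is to read $Q_{n+1}(x)$ directly off Theorem~\ref{t1}, as the generating function of the weighted binary sums that index the entries of a row. The factor $1+x^{T_n}$ will then come from the top binary digit $\varepsilon_{n-2}$ of $k$.

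Fix $n+1\ge3$. By Theorem~\ref{t1}, as $k=\varepsilon_{n-2}\dots\varepsilon_0^{[2]}$ runs over $0,\dots,2^{n-1}-1$, the entries of row $n+1$ are
$$
{\mathcal T}_{n+1,k}=m_a(n+1)+(b-a)S(k),\qquad S(k)=\sum_{j=0}^{n-2}\varepsilon_jT_{j+2}.
$$
By Corollary~\ref{cn2}, the vertex values of row $n+1$ of $({\mathcal F})$ are $v_i=T_{n+1}+a(T^\star_{n+1}-1)+(b-a)i$. I first check that $v_0=m_a(n+1)$. This holds because
$$
T^\star_{n+1}-1=\frac{T_{n+2}+T_n-1}{2},
$$
which is exactly the coefficient of $a$ in $m_a(n+1)$. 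Since $a<b$, the map $i\mapsto v_i$ is injective. Hence ${\mathcal T}_{n+1,k}=v_i$ if and only if $S(k)=i$, and the frequency is $\gamma_i=\#\{k: S(k)=i\}$. This identification of the fused vertex index with the integer $S(k)$ is the only point needing care, and it is the step I regard as the main (if mild) obstacle. It relies on the nonzero slope $b-a$ and on matching the constant terms as above.

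Granting that identification, the generating function becomes
$$
Q_{n+1}(x)=\sum_{i}\gamma_i x^i=\sum_{k=0}^{2^{n-1}-1}x^{S(k)}=\sum_{(\varepsilon_0,\dots,\varepsilon_{n-2})\in\{0,1\}^{n-1}}\ \prod_{j=0}^{n-2}x^{\varepsilon_jT_{j+2}}=\prod_{j=0}^{n-2}\bigl(1+x^{T_{j+2}}\bigr).
$$
Reindexing gives $Q_{n+1}(x)=\prod_{m=2}^{n}(1+x^{T_m})$. The same formula with $n$ in place of $n+1$ gives $Q_n(x)=\prod_{m=2}^{n-1}(1+x^{T_m})$ for $n\ge3$, and $Q_2(x)=1$ is the empty product. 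Dividing the two products yields $Q_{n+1}(x)=Q_n(x)(1+x^{T_n})$. As a sanity check, this gives $Q_3=Q_2(1+x^{T_2})=1+x$ and $Q_4=(1+x)(1+x^{T_3})=1+2x+x^2$, in agreement with the values listed above.

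As an alternative route, closer to \cite{LSz}, I could argue via the halves of a row. Theorem~\ref{tn2} shows that the first half of row $n+1$ is row $n$ translated by a constant. Since row $n$ and the first half of row $n+1$ have the same minimum, this contributes exactly $Q_n(x)$. Corollary~\ref{cn3} shows that the second half is the first half translated by $(b-a)T_n$, which contributes $x^{T_n}Q_n(x)$. Adding the two contributions gives the claim. I would present the product argument and mention this as a remark.
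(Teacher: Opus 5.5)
Your proof is correct, but it runs in the opposite logical direction from the paper. The paper gives no separate proof here. It defers to the method of \cite{LSz}, which is the half-row argument you sketch at the end. By Theorem~\ref{tn2}, the first half of row $n+1$ is a translate of row $n$ whose minimum lands on $v_0=m_a(n+1)$ (the entry with $k=0$), so it contributes $Q_n(x)$. By Corollary~\ref{cn3}, the second half is the first half shifted by $(b-a)T_n$, so it contributes $x^{T_n}Q_n(x)$. The product formula of Theorem~\ref{tn444} is then obtained by iterating from $Q_2=1$. You instead read the whole multiset of row $n+1$ off the closed form of Theorem~\ref{t1}. You observe that fusing vertices amounts to grouping the indices $k$ by the integer $S(k)$; this uses your check $v_0=m_a(n+1)$ via $T^\star_{n+1}-1=(T_{n+2}+T_n-1)/2$, together with $b\ne a$. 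You then factor the sum over $\{0,1\}^{n-1}$, which gives Theorem~\ref{tn444} first and the recurrence as a one-line consequence. Your route is more economical. It needs only Theorem~\ref{t1}, from which Theorem~\ref{tn2} and Corollary~\ref{cn3} follow directly anyway, and it proves both theorems at once. It is also exactly the independent-digit viewpoint that Section 5 later exploits. The paper's route is more local and structural: it uses only how row $n+1$ is assembled from row $n$, which matches the construction of the fused graph $(\mathcal F)$ and explains the self-similarity visible in Figure~\ref{fig3}. One wording slip in your closing remark: row $n$ and the first half of row $n+1$ do not have the same minimum. What is true is that the translation carries the minimum of row $n$ to the minimum of row $n+1$, which is why no index shift occurs.
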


A direct consequence of the previous theorem is
\begin{theorem}\label{tn444} (analogous to Theorem 4 in \cite{LSz})
	\begin{equation*}\label{qqriq}
		Q_n(x)=\prod_{k=2}^{n-1}(1+x^{T_k}).
	\end{equation*}
\end{theorem}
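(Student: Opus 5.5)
The plan is to iterate Theorem \ref{tn333} and check the base case. As computed just before that theorem, $Q_2(x)=1$, which is the empty product $\prod_{k=2}^{1}(1+x^{T_k})$. One can also start at $n=3$: there $Q_3(x)=1+x=1+x^{T_2}$ because $T_2=1$.

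For the inductive step, assume $Q_n(x)=\prod_{k=2}^{n-1}(1+x^{T_k})$ for some $n\ge 2$. Theorem \ref{tn333} gives $Q_{n+1}(x)=Q_n(x)(1+x^{T_n})$. Substituting the hypothesis yields $\prod_{k=2}^{n}(1+x^{T_k})$, which is the claim for $n+1$.

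The argument is only as strong as Theorem \ref{tn333}, which is stated here without proof. If I had to supply it, I would use Theorem \ref{tn2} and Corollary \ref{cn4}. These show that row $n+1$ of the tree consists of two copies of row $n$. The first half is row $n$ shifted by $aT_n+T_{n-1}+T_{n-2}$. The second half is row $n$ shifted by $bT_n+T_{n-1}+T_{n-2}$.

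In the indexing $v_i=T_{n+1}+a(T^\star_{n+1}-1)+(b-a)i$, the two shifts differ by $(b-a)T_n$. So the second half occupies the same pattern of indices as the first half, moved $T_n$ steps to the right. Assuming the first half starts at index $0$, the frequency generating polynomial is therefore $Q_n(x)+x^{T_n}Q_n(x)$.

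The main obstacle is confirming that the first half really starts at index $0$, that is, that $m_a(n+1)$ equals the first-half shift applied to $m_a(n)$. This is the identity
$$T_n+a\,\frac{T_{n+1}+T_{n-1}-1}{2}+aT_n+T_{n-1}+T_{n-2}=T_{n+1}+a\,\frac{T_{n+2}+T_n-1}{2}.$$
The constant parts agree by the Tribonacci recurrence. The $a$-parts agree because $T_{n+2}+T_n-T_{n+1}-T_{n-1}=2T_n$, which is again the recurrence.

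The count of distinct values in row $n$ must also be consistent with the degree of $Q_n$. This follows from Corollary \ref{cn2}, since $\sum_{k=2}^{n-1}T_k=T^\star_n-1$ by Corollary \ref{C1}.
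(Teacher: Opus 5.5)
Your proof is correct and matches the paper, which obtains Theorem~\ref{tn444} as a direct consequence of iterating Theorem~\ref{tn333} from the base case $Q_2(x)=1$ (equivalently $Q_3(x)=1+x$). Your optional reconstruction of Theorem~\ref{tn333} from Theorem~\ref{tn2} and Corollary~\ref{cn4} is also sound, given $a\ne b$: this covers the index check $m_a(n)+aT_n+T_{n-1}+T_{n-2}=m_a(n+1)$ and the observation that the second half of the row is the first half shifted by $T_n$ indices.
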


Remark: This formula shows that the distribution of representation counts is governed by the Tribonacci numbers themselves, producing a self-similar coefficient structure visible in Figure \ref{fig3}. For illustration, we show the coefficients of $Q_{11}(x)$ in a bar diagram. The fractal structure is evident.

\begin{figure}[h]
	\centering
	\includegraphics[scale=0.4]{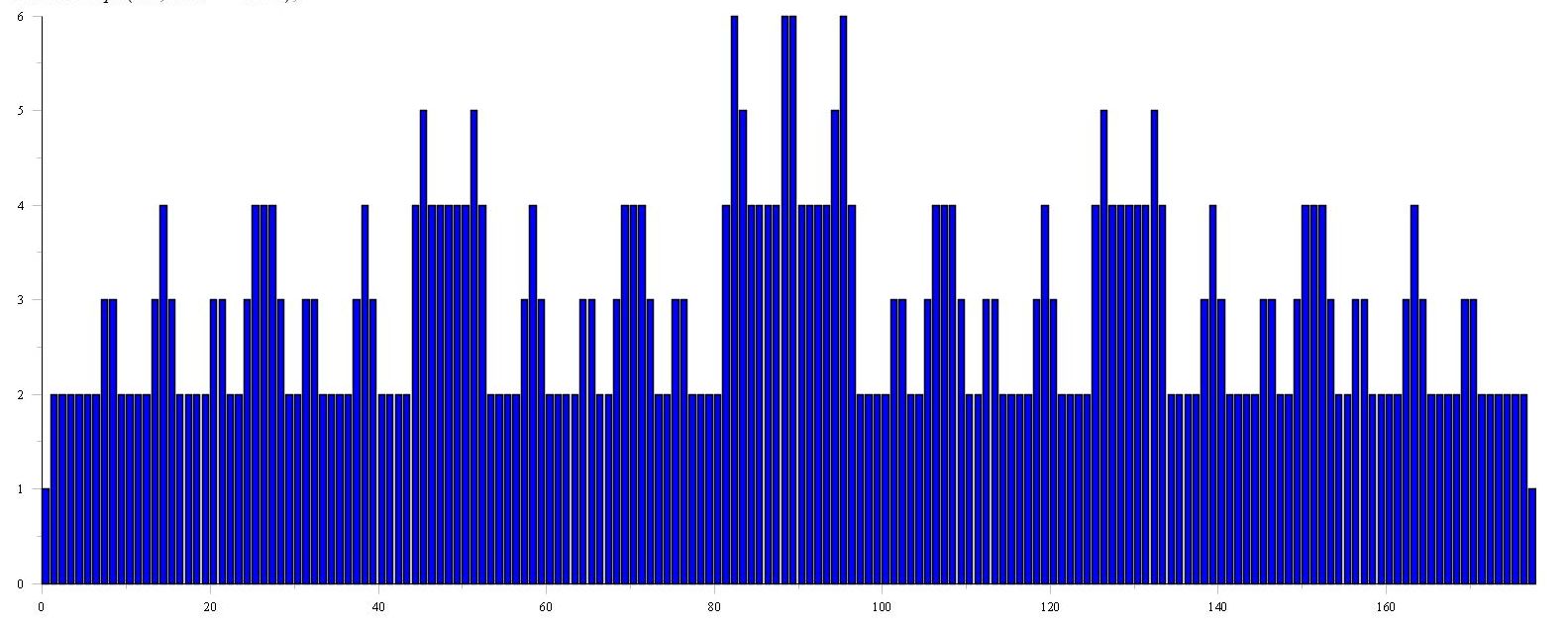} 
	\caption{Bar diagram of the coefficients of $Q_{11}(x)$.}
	\label{fig3}
\end{figure}

\section{A probabilistic interpretation and first-order statistics}

\noindent We now interpret the polynomial $Q_n(x) = \prod_{k=2}^{n-1} (1 + x^{T_k})$ as the probability generating function of a random variable.\\

\noindent Let $(\varepsilon_k)_{k \geq 2}$ be a sequence of independent Bernoulli random variables with
\begin{equation*}
\mathbb{P}(\varepsilon_k = 1) = \mathbb{P}(\varepsilon_k = 0) = \tfrac{1}{2}.
\end{equation*}

\noindent Define the random sum $S_n := \sum_{k=2}^{n-1} \varepsilon_k T_k$.

\begin{prop}[Probabilistic representation]
The coefficients of $Q_n(x)$ define the probability mass function of $S_n$. More specifically, 

\begin{equation*}
\frac{Q_n(x)}{2^{n-2}} = \mathbb{E}[x^{S_n}] = \sum_{m \geq 0} \mathbb{P}(S_n = m)x^m.
\end{equation*}
\end{prop}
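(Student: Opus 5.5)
The plan is to compute the probability generating function of $S_n$ directly from independence, and then identify the result with $Q_n(x)$ through Theorem \ref{tn444}.

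First, for each $k\ge 2$ the summand $\varepsilon_k T_k$ takes the value $0$ with probability $\tfrac12$ and the value $T_k$ with probability $\tfrac12$. Hence
\begin{equation*}
\mathbb{E}\bigl[x^{\varepsilon_k T_k}\bigr]=\frac{1+x^{T_k}}{2}.
\end{equation*}
Since the $\varepsilon_k$ are independent, so are the random variables $x^{\varepsilon_k T_k}$ (for a fixed real or complex $x$, or treated formally as polynomials in $x$). The expectation of the product therefore factors:
\begin{equation*}
\mathbb{E}[x^{S_n}]=\prod_{k=2}^{n-1}\mathbb{E}\bigl[x^{\varepsilon_kT_k}\bigr]=\frac{1}{2^{n-2}}\prod_{k=2}^{n-1}(1+x^{T_k})=\frac{Q_n(x)}{2^{n-2}}.
\end{equation*}
The last equality is exactly Theorem \ref{tn444}.

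Second, $S_n$ takes only finitely many nonnegative integer values, so by definition $\mathbb{E}[x^{S_n}]=\sum_{m\ge0}\mathbb{P}(S_n=m)x^m$ is a polynomial. Two polynomials that agree as functions, for instance on infinitely many $x$, have the same coefficients. So the coefficient of $x^m$ in $Q_n(x)$ equals $2^{n-2}\,\mathbb{P}(S_n=m)$, which is the claimed description of the probability mass function.

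As a sanity check, and as an alternative combinatorial route, expand the product: the coefficient of $x^m$ counts the vectors $(\varepsilon_2,\dots,\varepsilon_{n-1})\in\{0,1\}^{n-2}$ with $\sum\varepsilon_kT_k=m$. Each such vector has probability $2^{-(n-2)}$, which gives the same conclusion without generating functions.

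The only point needing care is the indexing. The product in Theorem \ref{tn444} runs over $k=2,\dots,n-1$, which is $n-2$ factors, and this matches the normalisation $2^{n-2}$ and the definition of $S_n$. Consistently, $Q_n(1)=2^{n-2}$ is the number of entries in row $n$ of the tree. Beyond this bookkeeping I expect no real obstacle, since the statement is a direct reformulation of Theorem \ref{tn444}.
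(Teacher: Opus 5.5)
Your proposal is correct and follows essentially the same route as the paper: factor $\mathbb{E}[x^{S_n}]$ by independence into $\prod_{k=2}^{n-1}\tfrac12(1+x^{T_k})$ and identify this product with $Q_n(x)$ via Theorem~\ref{tn444}. The paper normalizes by noting $Q_n(1)=2^{n-2}$ instead of pulling out the factors $\tfrac12$ directly, and it omits your coefficient-comparison step, which is a harmless and welcome addition.
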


\begin{proof}

By independence, the generating function of $\varepsilon_k T_k$ is $\mathbb{E}[x^{\varepsilon_k T_k}] = \tfrac{1}{2}(1 + x^{T_k})$.\\

Taking products over $k$ yields $\mathbb{E}[x^{S_n}] = \prod_{k=2}^{n-1} \tfrac{1}{2}(1 + x^{T_k})$, which agrees with $Q_n(x)$ up to normalization. Since $Q_n(1) = 2^{n-2}$, dividing by this factor yields the stated probability generating function.
\end{proof}

\noindent This representation shows that $S_n$ is a sum of independent random variables supported on $\{0, T_k\}$, and hence its distribution is a finite convolution of Bernoulli measures.

\medskip

\subsection*{Mean and variance}

We now compute the first two moments of $S_n$.

\begin{prop}
The expectation and variance of $S_n$ are given by $\mathbb{E}[S_n] = \frac{1}{2} \sum_{k=2}^{n-1} T_k$ and $\mathrm{Var}(S_n) = \frac{1}{4} \sum_{k=2}^{n-1} T_k^2$.

\end{prop}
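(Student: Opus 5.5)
The plan is to use linearity of expectation and additivity of variance over independent summands, applied to the definition $S_n=\sum_{k=2}^{n-1}\varepsilon_k T_k$. Nothing beyond the independence and the Bernoulli$(1/2)$ law of the $\varepsilon_k$ fixed just before the preceding proposition is needed. As a sanity check, I will also recover the same moments from the probability generating function $Q_n(x)/2^{n-2}$ of that proposition.

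\textbf{Mean.} For each $k$, $\mathbb{E}[\varepsilon_k]=\tfrac12$, so $\mathbb{E}[\varepsilon_kT_k]=\tfrac12T_k$ because $T_k$ is deterministic. Summing over $k=2,\dots,n-1$ by linearity gives $\mathbb{E}[S_n]=\tfrac12\sum_{k=2}^{n-1}T_k$. If desired, this can be put in closed form using Corollary \ref{C1} (with $T_0=T_1=0$), which gives $\mathbb{E}[S_n]=\frac{T_{n+1}+T_{n-1}-1}{4}$. Note that this is half of $T^\star_n-1$ from Corollary \ref{cn2}, which matches the symmetry of the coefficients of $Q_n$.

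\textbf{Variance.} For each $k$, $\varepsilon_k^2=\varepsilon_k$, so $\mathrm{Var}(\varepsilon_k)=\tfrac12-\tfrac14=\tfrac14$, and therefore $\mathrm{Var}(\varepsilon_kT_k)=\tfrac14T_k^2$. Since the summands are independent, all covariances vanish and the variances add, giving $\mathrm{Var}(S_n)=\tfrac14\sum_{k=2}^{n-1}T_k^2$.

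\textbf{Cross-check via the generating function.} Take logarithmic derivatives of $\prod_{k}\tfrac12(1+x^{T_k})$ at $x=1$. The first derivative of the logarithm at $x=1$ gives the mean as $\sum_k T_k/2$. The second-order term, $\sum_k\bigl(\tfrac{T_k(T_k-1)}{2}-\tfrac{T_k^2}{4}\bigr)$, combined with the mean yields $\sum_k T_k^2/4$.

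There is no genuine obstacle here. The only point to state explicitly is that independence is used for the variance, where it makes the cross terms vanish, but is not needed for the mean.
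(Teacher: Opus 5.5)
Your proof is correct and matches the paper's: linearity of expectation gives the mean, and independence together with $\mathrm{Var}(\varepsilon_k T_k)=T_k^2/4$ gives the variance. Your closed form $\mathbb{E}[S_n]=\frac{T_{n+1}+T_{n-1}-1}{4}$ and your generating-function cross-check go beyond what the paper does, and both check out; you are also right that independence is needed only for the variance, not the mean.
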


\begin{proof}
By linearity of expectation and independence, $\mathbb{E}[S_n] = \sum_{k=2}^{n-1} \mathbb{E}[\varepsilon_k T_k]
= \sum_{k=2}^{n-1} \tfrac{1}{2} T_k$.\\

Similarly, $\mathrm{Var}(S_n) = \sum_{k=2}^{n-1} \mathrm{Var}(\varepsilon_k T_k)
= \sum_{k=2}^{n-1} T_k^2 \cdot \mathrm{Var}(\varepsilon_k)
= \frac{1}{4} \sum_{k=2}^{n-1} T_k^2$.

\end{proof}

\subsection*{A heuristic observation}

\noindent
To study asymptotic behaviour, we normalize by the $T_n$ and define $X_n := \frac{S_n}{T_n}$. Using the asymptotic growth $T_k \sim C \rho^k$, where $\rho > 1$ is the Tribonacci constant, we obtain $\frac{T_k}{T_n} \sim \rho^{k-n}$. Thus, $X_n \approx \sum_{k=2}^{n-1} \varepsilon_k \rho^{k-n}$.\\

\noindent Reindexing via $j = n-k$, we obtain the representation $X_n \approx \sum_{j=1}^{n-2} \varepsilon_{n-j} \rho^{-j}$.\\

\noindent This observation suggests convergence to an infinite random series, which we make precise in the next subsection.

\subsection*{Limiting distribution}

We now make the heuristic argument above precise.

\begin{theorem}[Limiting distribution]

Let $X_n := \frac{S_n}{T_n}$. Then $X_n$ converges in distribution to the random variable

\begin{equation*}
X := \sum_{j=1}^{\infty} \varepsilon_j \rho^{-j},
\end{equation*}

where $(\varepsilon_j)_{j \geq 1}$ are independent Bernoulli$(\tfrac{1}{2})$ random variables.
\end{theorem}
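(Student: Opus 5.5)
We prove the convergence in distribution by a coupling argument. The key observation is that the law of $S_n$ does not change if we relabel the i.i.d.\ Bernoulli variables. So for each $n$ we may realize $S_n$ as
$$
S_n \overset{d}{=} \widetilde S_n:=\sum_{j=1}^{n-2}\varepsilon_j T_{n-j},
$$
using one fixed sequence $(\varepsilon_j)_{j\ge1}$ of independent Bernoulli$(\tfrac12)$ variables; here $\varepsilon_{n-k}$ plays the role of the original $\varepsilon_k$. Then $X_n\overset{d}{=}\widetilde X_n:=\widetilde S_n/T_n$, and $\widetilde X_n$ and $X$ live on the same probability space. We will show $\widetilde X_n\to X$ almost surely, indeed uniformly over all $0$--$1$ sequences. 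Almost sure convergence implies convergence in distribution, and the conclusion then transfers to $X_n$.

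To estimate $|\widetilde X_n - X|$ we use the Binet-type asymptotic $T_m = C\rho^m + O(\theta^m)$, where $C>0$ and $\theta=|\rho_2|<1$ is the common modulus of the two complex conjugate roots of $x^3-x^2-x-1$. (We may take this standard fact for granted; it is only implicit in the heuristic above.) From it we get $T_{n-j}/T_n=\rho^{-j}(1+\eta_{n,j})$, and we bound the difference by splitting the sum:
$$
|\widetilde X_n - X|\le \sum_{j=1}^{n-2}\Bigl|\frac{T_{n-j}}{T_n}-\rho^{-j}\Bigr| + \sum_{j\ge n-1}\rho^{-j}.
$$
The tail is $O(\rho^{-n})$. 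For the first sum we write
$$
\frac{T_{n-j}}{T_n}-\rho^{-j}=\frac{C\rho^{n-j}+O(\theta^{n-j})}{C\rho^n+O(\theta^n)}-\rho^{-j}=O\bigl(\rho^{-n}\bigr)+O\bigl(\rho^{-j}\theta^n\rho^{-n}\bigr),
$$
where the constants are uniform in $j$ because $\theta^{n-j}\le 1$. Summing over $j\le n-2$ gives $O(n\rho^{-n})\to0$. We could instead avoid the explicit error term: it suffices to note that $T_{m}/T_{m+1}\to\rho^{-1}$, so $T_{n-j}/T_n\to\rho^{-j}$ for each fixed $j$. Combined with the domination $T_{n-j}/T_n\le c\,\lambda^{-j}$ for some $\lambda>1$, dominated convergence for series then gives the result. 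This domination follows from $T_{m+1}\ge T_m+T_{m-1}$, which yields a ratio bounded below by some $\lambda>1$ for $m\ge3$, with finitely many initial indices handled by a constant.

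The main obstacle is this uniform control of $\sum_j |T_{n-j}/T_n-\rho^{-j}|$. Pointwise convergence of each ratio is easy, but we need summability uniform in $n$. Our first choice is the dominated-convergence route with the geometric bound $T_{n-j}/T_n\le c\lambda^{-j}$, because it uses only the recurrence and the known limit of consecutive ratios. The Binet estimate serves as a fallback that additionally gives the rate $O(n\rho^{-n})$.

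Finally, since $|\widetilde X_n-X|\to0$ surely, for every bounded continuous $g$ we have $\mathbb{E}[g(X_n)]=\mathbb{E}[g(\widetilde X_n)]\to\mathbb{E}[g(X)]$ by bounded convergence. This is exactly convergence in distribution of $X_n$ to $X$. Note that $X$ is well defined because the series $\sum_j \varepsilon_j\rho^{-j}$ converges absolutely.
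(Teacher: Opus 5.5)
Your proposal is correct and follows essentially the paper's route. You relabel the i.i.d.\ Bernoulli variables to couple $X_n$ with $Y_n=\sum_{j=1}^{n-2}\varepsilon_jT_{n-j}/T_n$, prove $Y_n\to X$ almost surely (your dominated-convergence argument is the paper's truncation at $J$ with a uniform geometric tail bound, done abstractly), and transfer back via equality in law. The only differences are minor: you get the geometric domination from the recurrence rather than from $T_m\sim C\rho^m$ (to get a consecutive-ratio bound you also need $T_m\le 2T_{m-1}$ from Lemma \ref{l1}, which gives $T_{m+1}/T_m\ge 3/2$), and your Binet alternative additionally yields the explicit rate $O(n\rho^{-n})$.
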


\begin{proof}

We write

\begin{equation*}
X_n = \sum_{k=2}^{n-1}\varepsilon_k\frac{T_k}{T_n}.
\end{equation*}

Reindexing with $j=n-k$, we obtain

\begin{equation*}
X_n = \sum_{j=1}^{n-2} \varepsilon_{n-j}\frac{T_{n-j}}{T_n}.
\end{equation*}

Since the random variables $\varepsilon_k$ are i.i.d., for every $n$ we have

\begin{equation*}
X_n \overset{d}{=} Y_n := \sum_{j=1}^{n-2}
\varepsilon_j\frac{T_{n-j}}{T_n}.
\end{equation*}

We shall show that

\begin{equation*}
Y_n\longrightarrow
X := \sum_{j=1}^{\infty}\varepsilon_j\rho^{-j}
\end{equation*}

almost surely. This will imply the desired convergence in distribution.\\

Fix $J\geq 1$. From the asymptotic relation

\begin{equation*}
T_m\sim C\rho^m
\qquad (m\to\infty),
\end{equation*}

it follows, for every fixed $j$, that

\begin{equation*}
\frac{T_{n-j}}{T_n}
\longrightarrow
\rho^{-j}
\qquad (n\to\infty).
\end{equation*}

Consequently,

\begin{equation*}
\sum_{j=1}^{J} \varepsilon_j\frac{T_{n-j}}{T_n}
\longrightarrow \sum_{j=1}^{J}\varepsilon_j\rho^{-j}
\end{equation*}

almost surely as $n\to\infty$.\\

It remains to control the tails uniformly. Since
$T_m\sim C\rho^m$, there exist constants $c,C_1>0$
and $m_0\geq 1$ such that

\begin{equation*}
c\rho^m\leq T_m\leq C_1\rho^m
\qquad (m\geq m_0).
\end{equation*}

After rescaling the constant to absorb the finitely many terms
with index less than $m_0$, there exists $C_2>0$ such that,
for all sufficiently large $n$,

\begin{equation*}
\begin{aligned}
\sum_{j=J+1}^{n-2}\frac{T_{n-j}}{T_n}
&=
\frac{1}{T_n}\sum_{k=2}^{n-J-1}T_k\\
&\leq
C_2\rho^{-J}.
\end{aligned}
\end{equation*}

Since $0\leq\varepsilon_j\leq 1$, it follows that

\begin{equation*}
\left|
\sum_{j=J+1}^{n-2}
\varepsilon_j\frac{T_{n-j}}{T_n}
\right|
\leq C_2\rho^{-J}.
\end{equation*}

On the other hand,

\begin{equation*}
\left|
\sum_{j=J+1}^{\infty}\varepsilon_j\rho^{-j}
\right| \leq \sum_{j=J+1}^{\infty}\rho^{-j}
= \frac{\rho^{-J}}{\rho-1}.
\end{equation*}

Therefore,

\begin{equation*}
\begin{aligned}
|Y_n-X|
\leq {}&
\left|
\sum_{j=1}^{J}
\varepsilon_j
\left(
\frac{T_{n-j}}{T_n}-\rho^{-j}
\right)
\right|\\
&\quad
+C_2\rho^{-J}
+\frac{\rho^{-J}}{\rho-1}.
\end{aligned}
\end{equation*}

For fixed $J$, the first term tends to zero almost surely as
$n\to\infty$. Hence

\begin{equation*}
\limsup_{n\to\infty}|Y_n-X|
\leq C_2\rho^{-J}
+\frac{\rho^{-J}}{\rho-1}
\qquad\text{almost surely}.
\end{equation*}

Letting $J\to\infty$ gives

\begin{equation*}
Y_n\longrightarrow X
\qquad\text{almost surely}.
\end{equation*}

Since $X_n\overset{d}{=}Y_n$ for every $n$, we conclude that
\begin{equation*}
X_n\xrightarrow{d}X.
\end{equation*}

Finally, the limiting random series is well defined, since
\begin{equation*}
0\leq
\sum_{j=1}^{\infty}\varepsilon_j\rho^{-j}
\leq
\sum_{j=1}^{\infty}\rho^{-j}
=
\frac{1}{\rho-1}<\infty.
\end{equation*}
Thus the series defining $X$ converges absolutely (indeed, for
every realization of the Bernoulli variables).
\end{proof}
\medskip

\noindent
The limiting random variable $X$ is a classical example of a Bernoulli convolution with parameter $\rho^{-1}$.

\medskip

\subsection{\bf Self-similarity and distributional equation}

The limiting distribution satisfies a natural self-similarity relation.

\begin{prop}[Distributional fixed-point equation]
Let $X$ be as above. Then
\begin{equation*}
X \stackrel{d}{=} \rho^{-1}( X + \varepsilon),
\end{equation*}
where $\varepsilon \sim \mathrm{Bernoulli}(\tfrac{1}{2})$ is independent of $X$.
\end{prop}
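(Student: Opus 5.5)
The plan is to split off the first term of the series defining $X$ and recognise the remainder as a rescaled copy of $X$. I write
\begin{equation*}
X=\sum_{j=1}^{\infty}\varepsilon_j\rho^{-j}=\rho^{-1}\Bigl(\varepsilon_1+\sum_{j=1}^{\infty}\varepsilon_{j+1}\rho^{-j}\Bigr)=\rho^{-1}\bigl(\varepsilon_1+X'\bigr),
\end{equation*}
where $X':=\sum_{j=1}^{\infty}\varepsilon_{j+1}\rho^{-j}$. This is a pointwise identity for every realization, because the series converges absolutely (as noted at the end of the proof of the limiting distribution theorem). Rearranging the terms is therefore harmless.

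Next, I check the two distributional facts needed to replace $(\varepsilon_1,X')$ by $(\varepsilon,X)$. First, $X'\stackrel{d}{=}X$. The sequence $(\varepsilon_{j+1})_{j\ge1}$ is i.i.d. Bernoulli$(\tfrac12)$, exactly like $(\varepsilon_j)_{j\ge1}$. The series is a fixed measurable map (an almost sure, indeed everywhere, limit of partial sums) applied to the sequence. Hence the two series have the same law. Second, $\varepsilon_1$ is independent of $X'$, since $X'$ is measurable with respect to $\sigma(\varepsilon_2,\varepsilon_3,\dots)$, which is independent of $\varepsilon_1$.

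It follows that the pair $(\varepsilon_1,X')$ has the same joint law as $(\varepsilon,X)$ with $\varepsilon$ independent of $X$. Applying the continuous map $(e,y)\mapsto\rho^{-1}(e+y)$ then gives $X\stackrel{d}{=}\rho^{-1}(X+\varepsilon)$.

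The only step needing care is the equality in law of $X'$ and $X$, i.e. the fact that an infinite series of i.i.d. variables depends only on the joint law of the sequence. If more explicitness is desired, I would verify it at the level of partial sums: the partial sums of the two series have identical finite-dimensional laws and converge everywhere, so their limits agree in distribution.

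An alternative check uses characteristic functions. One has $\mathbb{E}[e^{itX}]=\prod_{j\ge1}\tfrac12(1+e^{it\rho^{-j}})$, and pulling out the $j=1$ factor gives $\varphi_X(t)=\tfrac12(1+e^{it\rho^{-1}})\,\varphi_X(t\rho^{-1})$, which is exactly the characteristic function of $\rho^{-1}(X+\varepsilon)$.
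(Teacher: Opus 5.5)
Your proof is correct and follows the paper's argument: split off the $j=1$ term, recognise $\sum_{j\ge 2}\varepsilon_j\rho^{-(j-1)}$ as a copy of $X$ that is independent of $\varepsilon_1$, and conclude. Your extra justification of the equality in law and your characteristic-function check are both sound; they go beyond the paper's argument but are not needed for it.
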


\begin{proof}

We write
\begin{equation*}
X = \sum_{j=1}^{\infty} \varepsilon_j \rho^{-j}
= \rho^{-1} \varepsilon_1 + \sum_{j=2}^{\infty} \varepsilon_j \rho^{-j}.
\end{equation*}

Factoring out $\rho^{-1}$ from the second sum gives

\begin{equation*}
X = \rho^{-1} \left( \varepsilon_1 + \sum_{j=2}^{\infty} \varepsilon_j \rho^{-(j-1)} \right).
\end{equation*}

Reindexing the sum,
\begin{equation*}
\sum_{j=2}^{\infty} \varepsilon_j \rho^{-(j-1)} \stackrel{d}{=} X.
\end{equation*}
Thus,
\begin{equation*}
X \stackrel{d}{=} \rho^{-1} X + \rho^{-1} \varepsilon_1  =\rho^{-1}(X+\varepsilon_1).
\end{equation*}

Since $\varepsilon_1\sim\operatorname{Bernoulli}(1/2)$ and is independent of $\sum_{j=2}^{\infty}\varepsilon_j\rho^{-(j-1)}$, the result follows upon writing $\varepsilon=\varepsilon_1$.
\end{proof}

\subsubsection*{Remark}
This identity shows that the law of $X$ is invariant under a simple affine random transformation. In particular, it can be viewed as the invariant measure of an iterated function system with maps $x \mapsto \rho^{-1} x$ and $x \mapsto \rho^{-1} x + \rho^{-1}$.\\

This perspective explains the fractal features observed in the distribution of $S_n/T_n$, and connects the present setting to the extensive literature on Bernoulli convolutions and self-similar measures.

\vspace{0.3cm}

\vspace{0.3cm}

\section{Acknowledgments}

For L.~Szalay the research was supported by Hungarian National Foundation for Scientific Research Grant No.~150284, the Slovak Scientific Grant Agency VEGA 1/0493/25, and and the National Research, Development and Innovation Office Grant 2023-1.2.4-T\'ET-2023-00063. He would like to express his greatest gratitude for the hospitality during his staying at the University of Pretoria. Special thanks to Maya Thackeray. T.P Chalebgwa's research was supported by the University of Pretoria's Research Development Program (RDP) grant.

\end{document}